\documentclass{article}
\usepackage{amsmath,amscd,amsfonts,amssymb,amsthm}
\usepackage{graphicx}
\usepackage{epstopdf}
\usepackage{a4wide}
\usepackage{graphicx}
\usepackage{subfigure}
\usepackage[latin1]{inputenc}
\usepackage{color}
\usepackage{caption}

\newcommand{\subj}[1]{\par\noindent{\bf Mathematics Subject Classification 2010: }#1.}
\newcommand{\keyw}[1]{\par\noindent{\bf Keywords: }#1.}


\theoremstyle{definition}
\newtheorem{definition}{Definition}
\newtheorem{theorem}{Theorem}
\newtheorem{example}{Example}

\theoremstyle{remark}


\def\a{\alpha}
\def\t{\tau}
\def\p{\psi}

\def\LD{{^CD_{a+}^{\a,\p}}}
\def\RD{{^CD_{b-}^{\a,\p}}}
\def\RLD{{D_{a+}^{\a,\p}}}
\def\RRD{{D_{b-}^{\a,\p}}}
\def\LI{{I_{a+}^{\a,\p}}}
\def\RI{{I_{b-}^{\a,\p}}}


\begin{document}

\title{Optimality conditions for fractional variational problems with free terminal time}

\author{Ricardo Almeida\\
{\tt ricardo.almeida@ua.pt}}

\date{Center for Research and Development in Mathematics and Applications (CIDMA)\\
Department of Mathematics, University of Aveiro, 3810--193 Aveiro, Portugal}

\maketitle


\begin{abstract}
This paper provides necessary and sufficient conditions of optimality for a variational problem involving a fractional derivative with respect to another function. Fractional Euler--Lagrange equations are proven for the fundamental problem and when in presence of an integral constraint. A Legendre condition, which is a second-order necessary condition, is also obtained. Other cases, such as the infinite horizon problem, with delays in the Lagrangian, and with high-order derivatives, are considered. A necessary condition that the optimal fractional order must satisfy is proved.
\end{abstract}

\subj{26A33, 49K05, 	34A08}

\keyw{Fractional calculus, Euler--Lagrange equation, Legendre condition, isoperimetric problem}


\section{Introduction}

In this work, we consider fractional integrals and fractional derivatives with respect to another function $\p$ (see \cite{Samko}).
To fix notation, in the following $\a>0$ is a real, $\p\in C^1[a,b]$ is an increasing function, such that $\p'(t)\not=0$, for all $t\in [a,b]$, and
$x:[a,b]\to\mathbb R$ is a function with some assumptions, so that the fractional operators that we deal with are well defined.
The left fractional integral of $x$, with respect to $\p$ of order $\a$, is defined as
$$\LI x(t):=\frac{1}{\Gamma(\a)}\int_a^t \p'(\t)(\p(t)-\p(\t))^{\a-1}x(\t)\,d\t,$$
and the right fractional integral of $x$ is
$$\RI x(t):=\frac{1}{\Gamma(\a)}\int_t^b \p'(\t)(\p(\t)-\p(t))^{\a-1}x(\t)\,d\t.$$
Considering special cases for the kernel, that is, for the function $\p$, we recover e.g. the Riemann--Liouville, the Hadamard and the
Erd\'{e}lyi--Kober integrals.
For fractional derivatives of Riemann--Liouville type, the left and right fractional derivatives of $x$ are defined as
$$\RLD x(t):= \left(\frac{1}{\p'(t)}\frac{d}{dt}\right)^n {I_{a+}^{n-\a,\p}}x(t)$$
and
$$\RRD x(t):=  \left(-\frac{1}{\p'(t)}\frac{d}{dt}\right)^n {I_{b-}^{n-\a,\p}}x(t)$$
respectively, where $n=[\a]+1$.
In this paper, we deal mainly with a Caputo-type fractional derivative. The concept is similar to the Riemann--Liouville derivative, but the
order of the dual integral/derivative is switched (see \cite{AlmeidaN}).

\begin{definition} Let $x\in C^n[a,b]$ be a function. The left Caputo fractional derivative of $x$ of order $\a$ with respect to $\p$ is given by
$$\LD x(t):=   {I_{a+}^{n-\a,\p}}\left(\frac{1}{\p'(t)}\frac{d}{dt}\right)^nx(t),$$
while the right Caputo fractional derivative of $x$  is given by
$$\RD x(t):=   {I_{b-}^{n-\a,\p}}\left(-\frac{1}{\p'(t)}\frac{d}{dt}\right)^nx(t),$$
where
$$n=[\a]+1 \, \mbox{ for } \, \a\notin\mathbb N, \quad n=\a\, \mbox{ for } \, \a\in\mathbb N.$$
\end{definition}
It results that, if $\alpha=m$ is an integer, then
$$\LD x(t)=\left(\frac{1}{\p'(t)}\frac{d}{dt}\right)^mx(t) \quad \mbox{and} \quad \RD x(t)=\left(-\frac{1}{\p'(t)}\frac{d}{dt}\right)^mx(t).$$
On the other hand, if $\a\in\mathbb R^+\setminus\mathbb N$, then
$$\LD x(t)=\frac{1}{\Gamma(n-\a)}\int_a^t \p'(\t)(\p(t)-\p(\t))^{n-\a-1}\left(\frac{1}{\p'(\t)}\frac{d}{d\t}\right)^nx(\t)\,d\t$$
and
$$\RD x(t)=\frac{1}{\Gamma(n-\a)}\int_t^b \p'(\t)(\p(\t)-\p(t))^{n-\a-1} \left(-\frac{1}{\p'(\t)}\frac{d}{d\t}\right)^nx(\t)\,d\t.$$
Since we are interested in a generalization of the ordinary derivatives, we will consider this second case only.
For example, we have the following formulas. Given $n<\beta\in\mathbb R$,
$$\LD (\p(t)-\p(a))^{\beta-1}=\frac{\Gamma(\beta)}{\Gamma(\beta-\a)}(\p(t)-\p(a))^{\beta-\a-1}$$
and
$$\RD (\p(b)-\p(t))^{\beta-1}=\frac{\Gamma(\beta)}{\Gamma(\beta-\a)}(\p(b)-\p(t))^{\beta-\a-1}.$$
Also, given $\lambda\in\mathbb R$, we have
$$\LD E_\a(\lambda(\p(t)-\p(a))^\a)=\lambda E_\a(\lambda(\p(t)-\p(a))^\a)$$
 and
 $$\RD E_\a(\lambda (\p(b)-\p(t))^\a)=\lambda E_\a(\lambda (\p(b)-\p(t))^\a).$$
There exists a relation between the fractional integral and the fractional derivative operators. In a certain sense, they are the inverse
operation of each other. In fact, we have that
$$\LI \LD x(t)=x(t)-\sum_{k=0}^{n-1}\frac{ \left(\frac{1}{\p'(t)}\frac{d}{dt}\right)^kx(a)}{k!}(\p(t)-\p(a))^k$$
and
$$\RI \RD x(t)=x(t)-\sum_{k=0}^{n-1}\frac{ \left(-\frac{1}{\p'(t)}\frac{d}{dt}\right)^kx(b)}{k!}(\p(b)-\p(t))^k.$$
For the converse, the relation is the following:
$$\LD \LI x(t)=x(t)\quad \mbox{and} \quad \RD \RI x(t)=x(t).$$
One crucial formula, when dealing with variational problems, is a form of integration by parts, with respect to the fractional operators.
For these Caputo-type fractional derivatives, they are as follows.

\begin{theorem}\label{integrationparts} Given $x \in C[a,b]$ and $y\in C^n[a,b]$, the following holds:
\begin{align*}\int_a^b x(t)  \LD y(t)\,dt &= \int_a^b \RRD  \left(\frac{x(t)}{\p'(t)}\right) y(t)\p'(t)\,dt\\
&\quad+\left[\sum_{k=0}^{n-1}\left(-\frac{1}{\p'(t)}\frac{d}{dt}\right)^k {I_{b-}^{n-\a,\p}}\left(\frac{x(t)}{\p'(t)}\right)\cdot
\left(\frac{1}{\p'(t)}\frac{d}{dt}\right)^{n-k-1}y(t)\right]_{t=a}^{t=b}\end{align*}
and
\begin{align*}\int_a^b x(t)  \RD y(t)\,dt &= \int_a^b \RLD \left(\frac{x(t)}{\p'(t)}\right) y(t)\p'(t)\,dt\\
&\quad +\left[\sum_{k=0}^{n-1}(-1)^{n-k}\left(\frac{1}{\p'(t)}\frac{d}{dt}\right)^k {I_{a+}^{n-\a,\p}} \left(\frac{x(t)}{\p'(t)}\right) \cdot
\left(\frac{1}{\p'(t)}\frac{d}{dt}\right)^{n-k-1}y(t)\right]_{t=a}^{t=b}.\end{align*}
\end{theorem}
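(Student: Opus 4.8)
\noindent The plan is to establish the first identity in detail and then note that the second follows from the mirror-image computation. First I would unfold the definition of the left Caputo derivative,
$$\LD y(t)=\frac{1}{\Gamma(n-\a)}\int_a^t\p'(\t)(\p(t)-\p(\t))^{n-\a-1}\left(\frac{1}{\p'(\t)}\frac{d}{d\t}\right)^n y(\t)\,d\t ,$$
so that $\int_a^b x(t)\,\LD y(t)\,dt$ becomes an iterated integral over the triangle $\{\,a\le\t\le t\le b\,\}$. Since $\p'$ is bounded away from zero the singular factor behaves like $(t-\t)^{n-\a-1}$ near the diagonal, which is integrable because $n-\a-1>-1$; as every other factor is continuous on the compact triangle, the integrand is absolutely integrable and Fubini's theorem permits interchanging the order of integration and pulling $\p'(\t)\left(\frac{1}{\p'(\t)}\frac{d}{d\t}\right)^n y(\t)$ out of the $t$-integral. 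Inserting a factor $\p'(t)/\p'(t)$ into what remains, the inner integral $\frac{1}{\Gamma(n-\a)}\int_\t^b\p'(t)(\p(t)-\p(\t))^{n-\a-1}\frac{x(t)}{\p'(t)}\,dt$ is recognised as exactly ${I_{b-}^{n-\a,\p}}\left(\frac{x}{\p'}\right)(\t)$. Renaming $\t$ to $t$ and using the identity $\p'(t)\left(\frac{1}{\p'(t)}\frac{d}{dt}\right)^n y(t)=\frac{d}{dt}\left[\left(\frac{1}{\p'(t)}\frac{d}{dt}\right)^{n-1}y(t)\right]$, this reduces the problem to
$$\int_a^b x(t)\,\LD y(t)\,dt=\int_a^b {I_{b-}^{n-\a,\p}}\left(\frac{x(t)}{\p'(t)}\right)\frac{d}{dt}\left[\left(\frac{1}{\p'(t)}\frac{d}{dt}\right)^{n-1}y(t)\right]dt .$$

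Next I would integrate by parts $n$ times, using repeatedly the two elementary identities $-\frac{d}{dt}z(t)=\p'(t)\left(-\frac{1}{\p'(t)}\frac{d}{dt}\right)z(t)$, which moves a derivative onto the ${I_{b-}^{n-\a,\p}}$-factor at the cost of a sign, and $\p'(t)\left(\frac{1}{\p'(t)}\frac{d}{dt}\right)^m z(t)=\frac{d}{dt}\left[\left(\frac{1}{\p'(t)}\frac{d}{dt}\right)^{m-1}z(t)\right]$, which exposes the next derivative on the $y$-factor. Writing $u={I_{b-}^{n-\a,\p}}(x/\p')$, at the $(k+1)$-st step the factor on the $u$-side is $\left(-\frac{1}{\p'(t)}\frac{d}{dt}\right)^k u(t)$ and the factor on the $y$-side is $\left(\frac{1}{\p'(t)}\frac{d}{dt}\right)^{n-k-1}y(t)$, which produces the boundary contribution $\left[\left(-\frac{1}{\p'(t)}\frac{d}{dt}\right)^k u(t)\cdot\left(\frac{1}{\p'(t)}\frac{d}{dt}\right)^{n-k-1}y(t)\right]_{t=a}^{t=b}$. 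After the $n$-th step the surviving integral equals $\int_a^b\left(-\frac{1}{\p'(t)}\frac{d}{dt}\right)^n u(t)\,y(t)\,\p'(t)\,dt=\int_a^b\RRD\left(\frac{x(t)}{\p'(t)}\right)y(t)\,\p'(t)\,dt$ by the definition of the right Riemann--Liouville derivative, and summing the boundary terms over $k=0,\dots,n-1$ yields precisely the bracketed expression in the statement. Formally this iteration is cleanest organised by induction on $n$, or by proving the single integration-by-parts reduction and applying it repeatedly.

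The second identity comes out of the mirror-image argument: one starts from $\RD y(t)={I_{b-}^{n-\a,\p}}\left(-\frac{1}{\p'(t)}\frac{d}{dt}\right)^n y(t)$, which carries an extra factor $(-1)^n$, and interchanging the order of integration over the triangle $\{\,a\le t\le\t\le b\,\}$ now produces ${I_{a+}^{n-\a,\p}}(x/\p')$ as the inner integral. Running the same $n$-fold integration by parts then leaves $\RLD(x/\p')$ in the bulk term, while the initial $(-1)^n$ together with the sign picked up at each of the successive steps combine into the factor $(-1)^{n-k}$ attached to the $k$-th boundary term.

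I expect the only genuine difficulty to be bookkeeping: keeping the operators $\frac{1}{\p'}\frac{d}{dt}$ and $-\frac{1}{\p'}\frac{d}{dt}$ straight through the $n$ successive integrations by parts and checking that the signs assemble into the two displayed patterns. Before starting I would record that the iterated quantities $\left(\frac{1}{\p'}\frac{d}{dt}\right)^k y$ are continuous on $[a,b]$ since $y\in C^n[a,b]$ and $\p\in C^1[a,b]$ with $\p'\neq0$, and that the fractional integral is smoothing enough that, under the regularity implicit in the statement (so that $\RRD(x/\p')$ and $\RLD(x/\p')$ exist), every integration by parts is legitimate and every boundary term is finite, with ${I_{b-}^{n-\a,\p}}(x/\p')(b)=0={I_{a+}^{n-\a,\p}}(x/\p')(a)$.
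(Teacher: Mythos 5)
The paper does not prove this theorem: it is stated in the introduction as a known identity, imported from the reference on the Caputo derivative with respect to another function, so there is no in-paper argument to compare against. Your proof is correct and is the standard one for such formulas: Fubini on the triangle $\{a\le\t\le t\le b\}$ (justified since the kernel singularity is of order $n-\a-1>-1$) turns $\int_a^b x\,\LD y\,dt$ into $\int_a^b {I_{b-}^{n-\a,\p}}\left(\frac{x}{\p'}\right)\p'\left(\frac{1}{\p'}\frac{d}{dt}\right)^n y\,dt$, and the $n$ successive classical integrations by parts, driven by the identity $\p'(t)\left(\frac{1}{\p'(t)}\frac{d}{dt}\right)^m z(t)=\frac{d}{dt}\left[\left(\frac{1}{\p'(t)}\frac{d}{dt}\right)^{m-1}z(t)\right]$, produce exactly the stated boundary sum and leave $\RRD\left(\frac{x}{\p'}\right)$ in the bulk; the sign bookkeeping $(-1)^{n-k}$ in the second identity also checks out. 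The one point worth making explicit is the one you already flag: the hypothesis $x\in C[a,b]$ by itself does not guarantee that ${I_{b-}^{n-\a,\p}}\left(\frac{x}{\p'}\right)$ is $n$ times differentiable, so the existence of $\RRD\left(\frac{x}{\p'}\right)$ (equivalently, the legitimacy of the final integration by parts) must be taken as an implicit standing assumption, exactly as the paper itself does when it imposes continuity of the corresponding Riemann--Liouville derivatives in its later theorems.
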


The paper is organized in the following way. In Section \ref{sec:main}, we present the main problem, and in Theorem \ref{teo:main} we prove an Euler--Lagrange type equation. In Section \ref{extension} we extend this result, by considering functionals where the lower bound of integration is greater than the lower bound of the fractional derivative. Next, in Section \ref{sec:iso}, we consider the variational problem subject to an integral constraint, in what is known as an isoperimetric problem, and in Section \ref{sec:legendre}, we deduce a second-order necessary condition that allows us to verify if the extremals are minimizers or not. In Sections \ref{sec:infinite} and \ref{sec:delay}, we consider the infinite horizon problem and the case where the Lagrange function has a delay, respectively. In Section \ref{sec:high}, we consider high-order derivatives in the functional, and derive the respective high-order Euler--Lagrange equation, and in Section \ref{sec:optimal}, we find a necessary condition that allows us to find the best fractional order to provide a minimum to the functional. Finally, in Section \ref{sec:suff}, we prove a sufficient condition  that guarantees the solutions of the Euler--Lagrange equations are almost  minimizers.

\section{Main results}

In this section, we study several variational problems, where the dynamic of the trajectories is described by a Caputo type fractional derivative.
We consider the initial point to be fixed, $x(a)=x_a$ ($x_a\in\mathbb R$), and the terminal point $T>a$ to be free, and thus it is also a variable of the problem.
We are interested in finding the optimal pair $(x,T)$ for the objective functionals.

\subsection{Fundamental problem}\label{sec:main}

The most important result in the calculus of variations is the so called Euler--Lagrange equation, which is a first order necessary
condition every extremizer of the functional must satisfy. For functionals depending on fractional operators, we find in the literature
numerous works already done for different kinds of fractional derivatives and initial/terminal conditions. Some examples are for the Riemann--Liouville
derivative \cite{AGRA1,Atanackovic,Baleanu2}, for the Caputo derivative \cite{AGRA2,Agnieszka1,withTatiana:Basia}, for the Riesz derivative
\cite{AGRA3,Almeida1,Baleanu1}.
We mention the recent books \cite{book2,book1}, where analytical and numerical methods are explained, respectively.

Our fractional variation problem with free terminal time is described in the following way. Let $L:[a,b]\times\mathbb R^2\to\mathbb R$
be a continuous function, such that there exist and are continuous the functions $\partial_2 L$ and $\partial_3 L$.
Define the functional
\begin{equation}\label{funct1}
J(x,T):=\int_a^T L(t,x(t),\LD x(t))\,dt, \quad (x,T)\in\Omega\times[a,b],
\end{equation}
where $\Omega$ is the set
$$\Omega:=\{x\in C^1[a,b]: x(a)\, \mbox{ is a fixed real}\},$$
which we endow with the norm
$$\|x\|_\Omega:=\max_{t\in[a,b]}|x(t)|+\max_{t\in[a,b]}\left|\LD x(t)\right|.$$
We say that $J$ assumes its minimum value at $(x^*,T^*)$ in $\Omega\times[a,b]$, relative to the norm
$$\|(x,t)\|:=\|x\|_\Omega+|t|,$$
provided that
$$\exists \epsilon>0 \, \forall (x,t)\in \Omega\times[a,b] \,: \, \|(x^*,T^*)-(x,t)\|<\epsilon \Rightarrow J(x^*,T^*) \leq J(x,t).$$
In this case, we say that  $(x^*,T^*)$ is a local minimum for $J$. An admissible variation for  $(x^*,T^*)$ is a pair
$(x^*+\epsilon v,T^*+\epsilon \triangle T)$, where $v\in C^1[a,b]$ and $v(a)=0$, $\epsilon,\triangle T \in\mathbb R$ and $|\epsilon| \ll 1$.
The next result provides a necessary condition that every local minimum for $J$ must  satisfy. In order to simplify the notation, we define $[x]$ as
$$[x](t):=(t,x(t),\LD x(t)), \quad (x,t)\in \Omega\times[a,b].$$

\begin{theorem}\label{teo:main} Suppose that $(x^*,T^*)$ is a local minimum for $J$ as in \eqref{funct1} on the space $\Omega\times[a,b]$.
If there exists and is continuous the function $t\mapsto {D_{T^*-}^{\a,\p}}\left(\partial_3 L[x^*](t)/\p'(t)\right)$ on $[a,T^*]$, then
\begin{equation}\label{ELeq}\partial_2 L[x^*](t)+{D_{T^*-}^{\a,\p}}\left(\frac{\partial_3 L[x^*](t)}{\p'(t)}\right)\p'(t)=0\end{equation}
for each $t\in[a,T^*]$, and at $t=T^*$, the following transversality conditions are satisfied:
\begin{equation}\label{TransCon}\left\{
\begin{array}{l}
{I_{T^*-}^{1-\a,\p}}\left(\frac{\partial_3 L[x^*](t)}{\p'(t)}\right)=0\\
L[x^*](t)=0.
\end{array}
\right.\end{equation}
\end{theorem}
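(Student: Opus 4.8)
The plan is to use the standard variational technique: embed the optimal pair $(x^*,T^*)$ into a one-parameter family of admissible competitors, differentiate the resulting function of $\epsilon$ at $\epsilon=0$, and then extract the Euler--Lagrange equation and transversality conditions from the vanishing of this derivative. Concretely, fix an admissible variation $(x^*+\epsilon v, T^*+\epsilon\triangle T)$ with $v\in C^1[a,b]$, $v(a)=0$, and define
\[
j(\epsilon):=J(x^*+\epsilon v, T^*+\epsilon\triangle T)=\int_a^{T^*+\epsilon\triangle T} L\bigl(t,x^*(t)+\epsilon v(t),\LD x^*(t)+\epsilon\,{^CD_{a+}^{\a,\p}}v(t)\bigr)\,dt.
\]
Since $(x^*,T^*)$ is a local minimum, $j$ has a minimum at $\epsilon=0$, so $j'(0)=0$. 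Differentiating under the integral sign (using continuity of $\partial_2 L$, $\partial_3 L$, and the fact that $\LD v$ is continuous) together with the Leibniz rule for the variable upper limit yields
\[
0=\int_a^{T^*}\Bigl(\partial_2 L[x^*](t)\,v(t)+\partial_3 L[x^*](t)\,\LD v(t)\Bigr)\,dt + L[x^*](T^*)\,\triangle T.
\]

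Next I would apply the fractional integration-by-parts formula of Theorem~\ref{integrationparts} to the term $\int_a^{T^*}\partial_3 L[x^*](t)\,\LD v(t)\,dt$, taking $x(t)=\partial_3 L[x^*](t)$ and $y(t)=v(t)$ on the interval $[a,T^*]$ (with $n=1$, since we restrict to $\a\in(0,1)$, so the boundary sum has a single term $k=0$). This rewrites that integral as $\int_a^{T^*}{D_{T^*-}^{\a,\p}}\!\bigl(\partial_3 L[x^*](t)/\p'(t)\bigr)v(t)\,\p'(t)\,dt$ plus the boundary term $\bigl[{I_{T^*-}^{1-\a,\p}}(\partial_3 L[x^*](t)/\p'(t))\cdot v(t)\bigr]_{t=a}^{t=T^*}$; the lower endpoint contribution vanishes because $v(a)=0$. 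Substituting back gives
\[
0=\int_a^{T^*}\!\Bigl(\partial_2 L[x^*](t)+{D_{T^*-}^{\a,\p}}\!\Bigl(\tfrac{\partial_3 L[x^*](t)}{\p'(t)}\Bigr)\p'(t)\Bigr)v(t)\,dt + {I_{T^*-}^{1-\a,\p}}\!\Bigl(\tfrac{\partial_3 L[x^*](T^*)}{\p'(T^*)}\Bigr)v(T^*) + L[x^*](T^*)\,\triangle T.
\]
First choosing variations with $v(T^*)=0$ and $\triangle T=0$ and invoking the fundamental lemma of the calculus of variations forces the integrand to vanish, which is \eqref{ELeq}. With the Euler--Lagrange equation in hand the integral term drops out, leaving ${I_{T^*-}^{1-\a,\p}}(\partial_3 L[x^*](T^*)/\p'(T^*))\,v(T^*) + L[x^*](T^*)\,\triangle T=0$ for all choices of $v(T^*)\in\mathbb R$ and $\triangle T\in\mathbb R$; independence of these two parameters yields the two transversality conditions in \eqref{TransCon}.

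The main technical obstacle is the differentiation of $j(\epsilon)$ at $\epsilon=0$ and, relatedly, justifying that the hypothesis on continuity of $t\mapsto{D_{T^*-}^{\a,\p}}(\partial_3 L[x^*](t)/\p'(t))$ is exactly what makes the integration-by-parts step legitimate and the resulting integrand continuous so that the fundamental lemma applies. One must be a little careful that the upper limit of integration is itself varying, so the Leibniz term $L[x^*](T^*)\triangle T$ appears; and one should note that the $\epsilon$-dependence enters the integrand only through $x^*+\epsilon v$, so the derivative of the integral over the fixed-looking part is the obvious one. A subtlety worth a remark is that $v(T^*)$ and $\triangle T$ can be chosen fully independently — there is no endpoint constraint linking $x$ and $T$ here — which is what decouples the two transversality conditions; if instead the terminal point were constrained to a curve, these would combine into a single condition. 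Everything else is routine once the boundary term from Theorem~\ref{integrationparts} is correctly evaluated at the endpoints $a$ and $T^*$.
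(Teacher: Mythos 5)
Your proposal is correct and follows essentially the same route as the paper: the one-parameter family $j(\epsilon)=J(x^*+\epsilon v,T^*+\epsilon\triangle T)$, the Leibniz term $L[x^*](T^*)\triangle T$, the integration by parts of Theorem \ref{integrationparts} with $n=1$, the fundamental lemma applied after first setting $v(T^*)=0=\triangle T$, and then the independent choice of $v(T^*)$ and $\triangle T$ to decouple the two transversality conditions. Your added remarks on where the continuity hypothesis is used and on the independence of the endpoint parameters are consistent with, and slightly more explicit than, the paper's argument.
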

\begin{proof} Consider an admissible variation of the optimal solution of the form  $(x^*+\epsilon v,T^*+\epsilon \triangle T)$.
If we define the function $j$ in a neighborhood of zero by the expression
$$j(\epsilon):=J(x^*+\epsilon v,T^*+\epsilon \triangle T),$$
we have that $j'(0)=0$. Differentiating $j$ at $\epsilon=0$, and using the integration by parts formula as in Theorem \ref{integrationparts}, we obtain
\begin{align*}
j'(0)&= \int_a^{T^*}\left[\partial_2 L[x^*](t) \cdot v(t)+\partial_3 L[x^*](t)  \cdot \LD v(t)\right]dt+\triangle T\cdot L[x^*](T^*)\\
&= \int_a^{T^*}\left[\partial_2 L[x^*](t)+{D_{T^*-}^{\a,\p}}\left(\frac{\partial_3 L[x^*](t)}{\p'(t)}\right)\p'(t)\right]v(t)\,dt\\
& \quad + \left[{I_{T^*-}^{1-\a,\p}}\left(\frac{\partial_3 L[x^*](t)}{\p'(t)}\right)\cdot v(t)\right]_{t=a}^{t=T^*} +\triangle T\cdot L[x^*](T^*).\\
\end{align*}
Since $v(a)=0$, if we consider $v(T^*)=0=\triangle T$, by the fundamental lemma of the calculus of variations (cf. \cite[Lemma 2.2.2]{Brunt}),
we conclude that for all $t\in[a,T^*]$, $x^*$ satisfies the condition
$$\partial_2 L[x^*](t)+{D_{T^*-}^{\a,\p}}\left(\frac{\partial_3 L[x^*](t)}{\p'(t)}\right)\p'(t)=0.$$
Therefore, we have
$$\left[{I_{T^*-}^{1-\a,\p}}\left(\frac{\partial_3 L[x^*](t)}{\p'(t)}\right)\cdot v(t)\right]_{t=a}^{t=T^*} +\triangle T \cdot L[x^*](T^*).$$
Since $v(T^*)$ and $\triangle T$ are free, we obtain the two transversality conditions.
\end{proof}

Equations like \eqref{ELeq} are called Euler--Lagrange equations, and they provide a first-order necessary condition that all
minimizers of the problem must satisfy. Notice that, although the functional  \eqref{funct1} depends on a Caputo type fractional derivative,
the Euler--Lagrange equation \eqref{ELeq} involves a Riemann--Liouville fractional derivative. We can rewrite it in such a way that the fractional equation depends
on the Caputo derivative as well. Observe that, given a differentiable function $f$ and $\a\in(0,1)$, we have
\begin{align*}
{^CD_{T^*-}^{\a,\p}}f(t)& = {D_{T^*-}^{\a,\p}}(f(t)-f(T^*))\\
& = {D_{T^*-}^{\a,\p}}f(t)-f(T^*)\cdot{D_{T^*-}^{\a,\p}}1\\
& = {D_{T^*-}^{\a,\p}}f(t)-\frac{f(T^*)}{\Gamma(1-\a)}(\p(T^*)-\p(t))^{-\a}.
\end{align*}
Using this new relation, the Euler--Lagrange equation is written in the form
$$\partial_2 L[x^*](t)+{^CD_{T^*-}^{\a,\p}}\left(\frac{\partial_3 L[x^*](t)}{\p'(t)}\right)\p'(t)+
\frac{\partial_3 L[x^*](T^*)}{\p'(T^*)\Gamma(1-\a)}(\p(T^*)-\p(t))^{-\a}\p'(t)=0.$$
The variational problem involving several dependent variables is similar, and we omit the proof here.

\begin{theorem} Consider the functional
$$J(x_1,\ldots,x_m,T):=\int_a^T L[x](t)\,dt,$$
where $m\in\mathbb N$, the functions $x_i$ verify the two assumptions $x_i\in C^1[a,b]$ and $x_i(a)$ is a fixed real,  for all $i=1,\ldots,m$,
 the real $T$ belongs to the interval $[a,b]$, and
$$[x](t):=(t,x_1(t),\ldots,x_m(t),{^CD_{a+}^{\a_1,\p}} x_1(t),\ldots,{^CD_{a+}^{\a_m,\p}} x_m(t)), \quad \a_1,\ldots,\a_m\in(0,1).$$
 Suppose that $(x_1^*,\ldots,x_m^*,T^*)$ is a local minimum for $J$, and that there exist and
are continuous the functions $t\mapsto {D_{T^*-}^{\a_i,\p}}\left(\partial_{i+1+m} L[x^*](t)/\p'(t)\right)$ on $[a,T^*]$, for all $i=1,\ldots,m$. Then,
$$\partial_{i+1} L[x^*](t)+{D_{T^*-}^{\a_i,\p}}\left(\frac{\partial_{i+1+m} L[x^*](t)}{\p'(t)}\right)\p'(t)=0$$
for all $i=1,\ldots,m$ and for all $t\in[a,T^*]$, and at $t=T^*$, the following holds:
$$\left\{
\begin{array}{l}
{I_{T^*-}^{1-\a_i,\p}}\left(\frac{\partial_{i+1+m} L[x^*](t)}{\p'(t)}\right)=0, \quad i=1,\ldots,m\\
L[x^*](t)=0.
\end{array}
\right.$$
\end{theorem}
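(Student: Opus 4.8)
The plan is to imitate, almost word for word, the proof of Theorem~\ref{teo:main}, the only genuinely new point being the bookkeeping that separates the $m$ independent variations.

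First I would fix an admissible variation of the candidate optimal solution, $(x_1^*+\epsilon v_1,\ldots,x_m^*+\epsilon v_m,T^*+\epsilon\triangle T)$, where each $v_i\in C^1[a,b]$ satisfies $v_i(a)=0$, and $\epsilon,\triangle T\in\mathbb R$ with $|\epsilon|\ll 1$, and define $j(\epsilon):=J(x_1^*+\epsilon v_1,\ldots,x_m^*+\epsilon v_m,T^*+\epsilon\triangle T)$. Since $(x_1^*,\ldots,x_m^*,T^*)$ is a local minimum, $j$ has a minimum at $\epsilon=0$, so $j'(0)=0$. Differentiating under the integral sign — which is legitimate by the continuity hypotheses on $L$, $\partial_{i+1}L$ and $\partial_{i+1+m}L$, using that each Caputo operator is linear so that the $\epsilon$-derivative of ${^CD_{a+}^{\a_i,\p}}(x_i^*+\epsilon v_i)$ is ${^CD_{a+}^{\a_i,\p}}v_i$ — together with the Leibniz rule at the variable upper endpoint, I obtain
\[
j'(0)=\int_a^{T^*}\sum_{i=1}^m\left[\partial_{i+1}L[x^*](t)\,v_i(t)+\partial_{i+1+m}L[x^*](t)\cdot{^CD_{a+}^{\a_i,\p}}v_i(t)\right]dt+\triangle T\cdot L[x^*](T^*).
\]

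Next I would apply the integration-by-parts formula of Theorem~\ref{integrationparts} (with $b$ replaced by $T^*$ and $n=1$, since each $\a_i\in(0,1)$) to every summand $\int_a^{T^*}\partial_{i+1+m}L[x^*](t)\cdot{^CD_{a+}^{\a_i,\p}}v_i(t)\,dt$, and then collect terms, using $v_i(a)=0$ to discard the boundary contributions at $t=a$; this gives
\begin{align*}
j'(0)&=\sum_{i=1}^m\int_a^{T^*}\left[\partial_{i+1}L[x^*](t)+{D_{T^*-}^{\a_i,\p}}\!\left(\frac{\partial_{i+1+m}L[x^*](t)}{\p'(t)}\right)\p'(t)\right]v_i(t)\,dt\\
&\quad+\sum_{i=1}^m\left[{I_{T^*-}^{1-\a_i,\p}}\!\left(\frac{\partial_{i+1+m}L[x^*](t)}{\p'(t)}\right)\right]_{t=T^*}\!v_i(T^*)+\triangle T\cdot L[x^*](T^*).
\end{align*}
To extract the Euler--Lagrange system, I would fix an index $i$ and restrict to variations with $v_j\equiv 0$ for $j\ne i$, $v_i(T^*)=0$ and $\triangle T=0$; then $j'(0)=0$ forces $\int_a^{T^*}[\cdots]\,v_i(t)\,dt=0$ for every such $v_i$ vanishing at both endpoints, and the fundamental lemma of the calculus of variations (cf.\ \cite[Lemma 2.2.2]{Brunt}) yields the $i$-th equation on $[a,T^*]$; letting $i$ range over $\{1,\ldots,m\}$ produces the whole system. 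With all Euler--Lagrange equations in force the bracketed integrands vanish identically, so for an arbitrary admissible variation the condition $j'(0)=0$ collapses to
\[
\sum_{i=1}^m\left[{I_{T^*-}^{1-\a_i,\p}}\!\left(\frac{\partial_{i+1+m}L[x^*](t)}{\p'(t)}\right)\right]_{t=T^*}\!v_i(T^*)+\triangle T\cdot L[x^*](T^*)=0;
\]
since the $m+1$ numbers $v_1(T^*),\ldots,v_m(T^*),\triangle T$ can be prescribed arbitrarily and independently, each coefficient must vanish, which is exactly the stated transversality conditions.

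I do not expect a real obstacle here: the computation is the same as for Theorem~\ref{teo:main}, and the only points requiring care are (i) the justification of differentiation under the integral sign and of the Leibniz rule at the moving endpoint, which follow from the stated continuity of $L$, of its partials, and of $t\mapsto{D_{T^*-}^{\a_i,\p}}(\partial_{i+1+m}L[x^*](t)/\p'(t))$ on $[a,T^*]$; and (ii) the genuine independence of the endpoint data $v_1(T^*),\ldots,v_m(T^*),\triangle T$, which is what permits isolating the $m$ Euler--Lagrange equations and then the two transversality conditions, rather than only their linear combination.
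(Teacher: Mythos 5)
Your proof is correct and follows exactly the route the paper intends: the paper omits the proof of this theorem, stating only that it is "similar" to that of Theorem \ref{teo:main}, and your argument is precisely that proof carried over to $m$ components, with the first variation, the integration by parts of Theorem \ref{integrationparts} with $n=1$, the fundamental lemma applied componentwise, and the independent endpoint data $v_1(T^*),\ldots,v_m(T^*),\triangle T$ yielding the transversality conditions. No gaps.
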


\subsection{An extension}\label{extension}

In the previous problem, the lower limits of the cost functional and of the fractional derivative were the same, at $t=a$. In this section we generalize it,
by considering a cost functional starting at a point $t=A>a$.

\begin{theorem} Consider the functional
$$J(x,T):=\int_A^T L[x](t)\,dt,$$
where $x\in C^1[a,b]$ ($x(a)$ and $x(A)$ may be fixed or not) and $A,T\in[a,b]$ with $T>A$. Assume that $(x^*,T^*)$ is a local minimum for $J$, and that there
 exist and are continuous the functions $t\mapsto {D_{T^*-}^{\a,\p}}\left(\partial_3 L[x^*](t)/\p'(t)\right)$ on $[a,T^*]$ and
 $t\mapsto {D_{A-}^{\a,\p}}\left(\partial_3 L[x^*](t)/\p'(t)\right)$ on $[a,A]$. Then,
 $${D_{T^*-}^{\a,\p}}\left(\frac{\partial_3 L[x^*](t)}{\p'(t)}\right)-{D_{A-}^{\a,\p}}\left(\frac{\partial_3 L[x^*](t)}{\p'(t)}\right)=0,$$
for each $t\in[a,A]$, and
$$\partial_2 L[x^*](t)+{D_{T^*-}^{\a,\p}}\left(\frac{\partial_3 L[x^*](t)}{\p'(t)}\right)\p'(t)=0$$
for each $t\in[A,T^*]$. At $t=T^*$, the following holds:
$$\left\{
\begin{array}{l}
{I_{T^*-}^{1-\a,\p}}\left(\frac{\partial_3 L[x^*](t)}{\p'(t)}\right)=0\\
L[x^*](t)=0.
\end{array}
\right.$$
Moreover, if $x(a)$ is free, then at $t=a$:
$${I_{T^*-}^{1-\a,\p}}\left(\frac{\partial_3 L[x^*](t)}{\p'(t)}\right)-{I_{A-}^{1-\a,\p}}\left(\frac{\partial_3 L[x^*](t)}{\p'(t)}\right)=0,$$
and if $x(A)$ is free, then at $t=A$:
$${I_{A-}^{1-\a,\p}}\left(\frac{\partial_3 L[x^*](t)}{\p'(t)}\right)=0.$$
\end{theorem}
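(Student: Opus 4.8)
The plan is to follow the first-variation argument used for Theorem~\ref{teo:main}, the new feature being that the interval of integration $[A,T^*]$ no longer shares its left endpoint with the base point $a$ of the Caputo derivative, so the integration-by-parts formula of Theorem~\ref{integrationparts} cannot be applied directly to $\int_A^{T^*}$. First I would fix an admissible variation $(x^*+\epsilon v,T^*+\epsilon\triangle T)$ with $v\in C^1[a,b]$, imposing $v(a)=0$ (resp.\ $v(A)=0$) only in the subcase where $x(a)$ (resp.\ $x(A)$) is prescribed, and set $j(\epsilon):=J(x^*+\epsilon v,T^*+\epsilon\triangle T)$. Since $\LD$ is linear and $L,\partial_2L,\partial_3L$ are continuous, $j$ is differentiable near $0$ with $j'(0)=0$; differentiating under the integral sign and accounting for the variable upper limit gives
$$j'(0)=\int_A^{T^*}\bigl[\partial_2 L[x^*](t)\,v(t)+\partial_3 L[x^*](t)\,\LD v(t)\bigr]\,dt+\triangle T\cdot L[x^*](T^*).$$

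The crucial step is to rewrite $\int_A^{T^*}\partial_3 L[x^*](t)\,\LD v(t)\,dt$. Because $\partial_3 L[x^*]$ is in fact defined on all of $[a,T^*]$, I would write $\int_A^{T^*}=\int_a^{T^*}-\int_a^A$ and apply the integration-by-parts formula of Theorem~\ref{integrationparts} (with $n=1$) to each piece: on $[a,T^*]$ it produces ${D_{T^*-}^{\a,\p}}(\partial_3 L[x^*]/\p')$ plus boundary contributions at $a$ and $T^*$, and on $[a,A]$ it produces ${D_{A-}^{\a,\p}}(\partial_3 L[x^*]/\p')$ plus boundary contributions at $a$ and $A$ — here I would invoke the standing hypotheses that both these Riemann--Liouville derivatives exist and are continuous. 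Subtracting, the interior terms combine into ${D_{T^*-}^{\a,\p}}-{D_{A-}^{\a,\p}}$ on $[a,A]$ (the differentiated form of the additivity $I_{T^*-}^{1-\a,\p}=I_{A-}^{1-\a,\p}+(\text{tail over }[A,T^*])$ of the right fractional integral), so that
\begin{align*}
j'(0)&=\int_A^{T^*}\Bigl[\partial_2 L[x^*](t)+{D_{T^*-}^{\a,\p}}\Bigl(\tfrac{\partial_3 L[x^*](t)}{\p'(t)}\Bigr)\p'(t)\Bigr]v(t)\,dt\\
&\quad+\int_a^A\Bigl[{D_{T^*-}^{\a,\p}}\Bigl(\tfrac{\partial_3 L[x^*](t)}{\p'(t)}\Bigr)-{D_{A-}^{\a,\p}}\Bigl(\tfrac{\partial_3 L[x^*](t)}{\p'(t)}\Bigr)\Bigr]v(t)\p'(t)\,dt+\mathcal{B}+\triangle T\cdot L[x^*](T^*),
\end{align*}
where $\mathcal{B}$ collects the boundary terms at $a$, $A$ and $T^*$, written through the right fractional integrals ${I_{T^*-}^{1-\a,\p}}$ and ${I_{A-}^{1-\a,\p}}$. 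Note that $\partial_2L$ is absent from the $[a,A]$ integral, since the cost functional feels $v|_{[a,A]}$ only through the fractional derivative.

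To conclude I would localize in the usual way. Taking $v$ compactly supported in $(A,T^*)$ (so $\mathcal{B}=0$ and $\triangle T=0$) and applying the fundamental lemma of the calculus of variations yields the Euler--Lagrange equation on $[A,T^*]$; taking $v$ compactly supported in $(a,A)$ yields the difference equation ${D_{T^*-}^{\a,\p}}(\partial_3 L[x^*]/\p')-{D_{A-}^{\a,\p}}(\partial_3 L[x^*]/\p')=0$ on $[a,A]$. Once both equations hold, the two integrals vanish and $j'(0)=0$ collapses to a linear relation among the remaining independent quantities $v(T^*)$, $\triangle T$ and, in the corresponding subcases, $v(a)$ and $v(A)$; equating each coefficient to zero gives the transversality conditions at $t=T^*$ and the natural boundary conditions at $t=a$ and at $t=A$. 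The main obstacle is the bookkeeping of $\mathcal{B}$: one must evaluate the two right fractional integrals at the points $a$ and $A$ while carefully tracking which endpoint each operator is anchored to, so that the contributions coming from the $[a,T^*]$ and from the $[a,A]$ integrations by parts assemble into exactly the stated boundary conditions.
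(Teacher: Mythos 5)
Your proposal is correct and follows essentially the same route as the paper: decompose $\int_A^{T^*}=\int_a^{T^*}-\int_a^A$, integrate by parts on each piece (producing ${D_{T^*-}^{\a,\p}}$ and ${D_{A-}^{\a,\p}}$ respectively), combine the interior terms on $[a,A]$ into the difference of the two Riemann--Liouville derivatives, and then localize the variation to extract the two Euler--Lagrange equations and the boundary/transversality conditions. The only cosmetic difference is that the paper subtracts the full integrand (so the $\partial_2 L\cdot v$ terms cancel on $[a,A]$) whereas you apply the splitting only to the $\partial_3 L\cdot \LD v$ term, which yields the identical final expression.
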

\begin{proof} The first variation of the functional at an extremum must vanish, and so we conclude that
\begin{align*}
0&= \int_A^{T^*}\left[\partial_2 L[x^*](t) \cdot v(t)+\partial_3 L[x^*](t)  \cdot \LD v(t)\right]dt+\triangle T\cdot L[x^*](T^*)\\
&= \int_a^{T^*}\left[\partial_2 L[x^*](t) \cdot v(t)+\partial_3 L[x^*](t)  \cdot \LD v(t)\right]dt\\
& \quad -\int_a^A\left[\partial_2 L[x^*](t) \cdot v(t)+\partial_3 L[x^*](t)  \cdot \LD v(t)\right]dt+\triangle T\cdot L[x^*](T^*)\\
&= \int_a^{T^*}\left[\partial_2 L[x^*](t)+{D_{T^*-}^{\a,\p}}\left(\frac{\partial_3 L[x^*](t)}{\p'(t)}\right)\p'(t)\right]v(t)\,dt
 + \left[{I_{T^*-}^{1-\a,\p}}\left(\frac{\partial_3 L[x^*](t)}{\p'(t)}\right)\cdot v(t)\right]_{t=a}^{t=T^*}\\
&\quad -\int_a^A\left[\partial_2 L[x^*](t)+{D_{A-}^{\a,\p}}\left(\frac{\partial_3 L[x^*](t)}{\p'(t)}\right)\p'(t)\right]v(t)\,dt
 - \left[{I_{A-}^{1-\a,\p}}\left(\frac{\partial_3 L[x^*](t)}{\p'(t)}\right)\cdot v(t)\right]_{t=a}^{t=A}\\
& \quad +\triangle T\cdot L[x^*](T^*)\\
&=\int_a^A\left[{D_{T^*-}^{\a,\p}}\left(\frac{\partial_3 L[x^*](t)}{\p'(t)}\right)-{D_{A-}^{\a,\p}}\left(\frac{\partial_3 L[x^*](t)}{\p'(t)}\right)\right]
\p'(t)\cdot v(t)\,dt \\
& \quad +\int_A^{T^*}\left[\partial_2 L[x^*](t)+{D_{T^*-}^{\a,\p}}\left(\frac{\partial_3 L[x^*](t)}{\p'(t)}\right)\p'(t)\right]\cdot v(t)\,dt\\
&\quad + \left[{I_{T^*-}^{1-\a,\p}}\left(\frac{\partial_3 L[x^*](t)}{\p'(t)}\right)\cdot v(t)\right]_{t=a}^{t=T^*}
- \left[{I_{A-}^{1-\a,\p}}\left(\frac{\partial_3 L[x^*](t)}{\p'(t)}\right)\cdot v(t)\right]_{t=a}^{t=A}\\
& \quad +\triangle T\cdot L[x^*](T^*).
\end{align*}
By the arbitrariness of $v$ and $\triangle T$, we obtain the necessary optimality conditions.
\end{proof}

\subsection{Isoperimetric problem}\label{sec:iso}

We formulate now the variational problem when in presence of an integral constraint. We refer to \cite{Almeida2,Odziehjewicz}, where similar
problems were solved involving fractional derivatives. This kind of problems are known in the literature as isoperimetric problems. The most
ancient problem of this type goes back to the Ancient Greece, with the question of finding out which of all closed planar curves of the same length would enclose the greatest area. Nowadays, an isoperimetric problem is a variational problem, restricted to a subclass of functions satisfying
a side condition of the form
$$\int_a^b M(t,x(t),x'(t))\,dt=\mbox{constant.}$$
Here, we replace the ordinary derivative by a fractional derivative, and since the terminal time is free, the integral value is not a constant,
but a function depending on the terminal time. Let $M:[a,b]\times\mathbb R^2\to\mathbb R$ be a continuous function, such that there exist and are
continuous the functions $\partial_2 M$ and $\partial_3 M$.

\begin{theorem}\label{teo:isop} Suppose that $(x^*,T^*)$ is a local minimum for $J$ as in \eqref{funct1} on the space $\Omega\times[a,b]$,
subject to the integral constraint
$$G(x,T):=\int_a^TM[x](t)\,dt=\Phi(T),$$
where $\Phi\in C^1[a,b]$. Suppose that $(x^*,T^*)$ is not a solution of the equation
\begin{equation}\label{aux1}\partial_2 M[x](t)+{D_{T^*-}^{\a,\p}}\left(\frac{\partial_3 M[x](t)}{\p'(t)}\right)\p'(t)=0,
\quad \forall t \in[a,T^*],\end{equation}
and that there exist and are continuous the functions $t\mapsto {D_{T^*-}^{\a,\p}}\left(\partial_{3} L[x^*](t)/\p'(t)\right)$ and $t\mapsto {D_{T^*-}^{\a,\p}}
\left(\partial_{3} M[x^*](t)/\p'(t)\right)$ on $[a,T^*]$.
Then, there exists a real constant $\lambda$, such that if we define the augmented function $F:=L+\lambda M$, then $(x^*,T^*)$ satisfies the equation
$$\partial_2 F[x](t)+{D_{T^*-}^{\a,\p}}\left(\frac{\partial_3 F[x](t)}{\p'(t)}\right)\p'(t)=0, \quad \forall t \in[a,T^*],$$
and the system
$$\left\{
\begin{array}{l}
{I_{T^*-}^{1-\a,\p}}\left(\frac{\partial_3 F[x](t)}{\p'(t)}\right)=0\\
F[x](t)=\lambda \Phi'(t)
\end{array}\right. \quad \mbox{at} \, \, t=T^*.$$
\end{theorem}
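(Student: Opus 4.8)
The plan is to mimic the proof of Theorem \ref{teo:main}, but with the standard Lagrange multiplier twist for isoperimetric constraints, working with two-parameter variations. First I would fix a local minimum $(x^*,T^*)$ of $J$ subject to $G(x,T)=\Phi(T)$, and consider a two-parameter family of admissible variations
$$x=x^*+\epsilon_1 v_1+\epsilon_2 v_2, \qquad T=T^*+\epsilon_1 \triangle T_1+\epsilon_2 \triangle T_2,$$
where $v_1,v_2\in C^1[a,b]$ with $v_1(a)=v_2(a)=0$. Define $j(\epsilon_1,\epsilon_2):=J(x,T)$ and $g(\epsilon_1,\epsilon_2):=G(x,T)-\Phi(T)$, both defined in a neighbourhood of the origin and vanishing at $(0,0)$. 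Exactly as in Theorem \ref{teo:main} (differentiation under the integral sign plus the integration-by-parts formula of Theorem \ref{integrationparts}), I would compute
$$\frac{\partial g}{\partial \epsilon_2}(0,0)=\int_a^{T^*}\left[\partial_2 M[x^*](t)+{D_{T^*-}^{\a,\p}}\left(\frac{\partial_3 M[x^*](t)}{\p'(t)}\right)\p'(t)\right]v_2(t)\,dt+\left[{I_{T^*-}^{1-\a,\p}}\left(\frac{\partial_3 M[x^*](t)}{\p'(t)}\right)v_2(t)\right]_{t=a}^{t=T^*}+\triangle T_2\left(M[x^*](T^*)-\Phi'(T^*)\right).$$

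The hypothesis that $(x^*,T^*)$ does not solve equation \eqref{aux1} is precisely what lets me fix $v_2$ and $\triangle T_2$ so that $\frac{\partial g}{\partial \epsilon_2}(0,0)\neq 0$: if the integrand coefficient of $v_2$ is not identically zero on $[a,T^*]$, I can choose a $C^1$ bump $v_2$ (vanishing at $a$ and $T^*$) making the integral term nonzero, and set $\triangle T_2=0$. With this choice frozen, the implicit function theorem applied to $g(\epsilon_1,\epsilon_2)=0$ yields a $C^1$ function $\epsilon_2=\epsilon_2(\epsilon_1)$ near $\epsilon_1=0$ with $\epsilon_2(0)=0$, so that $(\epsilon_1,\epsilon_2(\epsilon_1))$ traces admissible competitors. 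The function $\epsilon_1\mapsto j(\epsilon_1,\epsilon_2(\epsilon_1))$ then has a local minimum at $\epsilon_1=0$, hence derivative zero there, which combined with $\frac{\partial g}{\partial \epsilon_1}+\frac{\partial g}{\partial \epsilon_2}\epsilon_2'(0)=0$ gives, in the classical way, that $\nabla j(0,0)$ is a scalar multiple of $\nabla g(0,0)$: there exists $\lambda\in\mathbb R$ with $\frac{\partial j}{\partial \epsilon_1}(0,0)=-\lambda\frac{\partial g}{\partial \epsilon_1}(0,0)$ (the sign absorbed into $\lambda$), and since $v_1,\triangle T_1$ were arbitrary, this identity holds for all admissible $v_1,\triangle T_1$. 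Equivalently, writing $F:=L+\lambda M$, the first variation of $\int_a^T F[x](t)\,dt-\lambda\Phi(T)$ at $(x^*,T^*)$ vanishes for all admissible variations.

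From here the argument is a verbatim repeat of Theorem \ref{teo:main} applied to $F$. Expanding that first variation with Theorem \ref{integrationparts} gives
$$\int_a^{T^*}\left[\partial_2 F[x^*](t)+{D_{T^*-}^{\a,\p}}\left(\frac{\partial_3 F[x^*](t)}{\p'(t)}\right)\p'(t)\right]v_1(t)\,dt+\left[{I_{T^*-}^{1-\a,\p}}\left(\frac{\partial_3 F[x^*](t)}{\p'(t)}\right)v_1(t)\right]_{t=a}^{t=T^*}+\triangle T_1\left(F[x^*](T^*)-\lambda\Phi'(T^*)\right)=0.$$
Taking $v_1(T^*)=0=\triangle T_1$ and applying the fundamental lemma of the calculus of variations (\cite[Lemma 2.2.2]{Brunt}) yields the Euler--Lagrange equation $\partial_2 F[x^*](t)+{D_{T^*-}^{\a,\p}}(\partial_3 F[x^*](t)/\p'(t))\p'(t)=0$ on $[a,T^*]$; the remaining boundary term $[{I_{T^*-}^{1-\a,\p}}(\partial_3 F[x^*](t)/\p'(t))v_1(t)]_{t=a}^{t=T^*}+\triangle T_1(F[x^*](T^*)-\lambda\Phi'(T^*))=0$ with $v_1(a)=0$ and $v_1(T^*)$, $\triangle T_1$ now free gives the two transversality conditions ${I_{T^*-}^{1-\a,\p}}(\partial_3 F[x^*](t)/\p'(t))|_{t=T^*}=0$ and $F[x^*](T^*)=\lambda\Phi'(T^*)$.

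The main obstacle is the multiplier extraction — making the ``abnormal'' case impossible. The delicate point is that the constraint here is not a single scalar equation but $G(x,T)=\Phi(T)$ with $T$ free, so I must be careful that the chosen variation $(v_2,\triangle T_2)$ really does move $g$ off zero and that the implicit function theorem applies to the finite-dimensional reduced map; the non-degeneracy hypothesis in the statement is exactly tailored to this, so the key is to phrase the bump-function construction cleanly and to note that $\triangle T_2=0$ keeps the transversality contribution $M[x^*](T^*)-\Phi'(T^*)$ out of the way. Everything after the multiplier is produced is routine and already done in Theorem \ref{teo:main}, so I would only sketch it. I would also remark that $\lambda$ may be taken independent of the particular $(v_1,\triangle T_1)$ by the usual argument (the ratio $\frac{\partial j}{\partial\epsilon_1}(0,0)/\frac{\partial g}{\partial\epsilon_1}(0,0)$ is forced to be constant once one such pair with $\frac{\partial g}{\partial\epsilon_1}(0,0)\neq0$ is available, and otherwise the normal Euler--Lagrange equation for $L$ alone already holds, so any $\lambda$ works).
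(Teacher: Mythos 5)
Your proposal is correct and follows essentially the same route as the paper: a two-parameter family of variations, the non-degeneracy hypothesis \eqref{aux1} plus the implicit function theorem to produce admissible competitors, the finite-dimensional Lagrange multiplier rule to obtain $\lambda$, and then a verbatim rerun of the argument of Theorem \ref{teo:main} for $F=L+\lambda M$. The only differences are cosmetic (you swap the roles of $\epsilon_1$ and $\epsilon_2$ and let $T$ depend on both parameters before setting $\triangle T_2=0$, whereas the paper attaches $\triangle T$ only to $\epsilon_2$), and you spell out the bump-function construction and the independence of $\lambda$ from the test direction, which the paper leaves implicit.
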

\begin{proof} Consider admissible variations of two parameters of kind  $(x^*+\epsilon_1 v_1+\epsilon_2v_2,T^*+\epsilon_2 \triangle T)$, where
$v_1,v_2\in C^1[a,b]$ with $v_1(a)=v_2(a)=0$, and $\epsilon_1,\epsilon_2,\triangle T\in\mathbb R$ with $|\epsilon_1|,|\epsilon_2|\ll1$. Define the two functions:
\begin{align*}j(\epsilon_1,\epsilon_2):=&J(x^*+\epsilon_1 v_1+\epsilon_2v_2,T^*+\epsilon_2 \triangle T)\\
g(\epsilon_1,\epsilon_2):=&G(x^*+\epsilon_1 v_1+\epsilon_2v_2,T^*+\epsilon_2 \triangle T)-\Phi(T^*+\epsilon_2 \triangle T).
\end{align*}
Since
\begin{align*}\frac{\partial g}{\partial \epsilon_1}(0,0)&=\int_a^{T^*}\left[\partial_2 M[x^*](t)+{D_{T^*-}^{\a,\p}}
\left(\frac{\partial_3 M[x^*](t)}{\p'(t)}\right)\p'(t)\right]v_1(t)\,dt\\
& \quad+ \left[{I_{T^*-}^{1-\a,\p}}\left(\frac{\partial_3 M[x^*](t)}{\p'(t)}\right)\cdot v_1(t)\right]_{t=a}^{t=T^*} ,\end{align*}
and $(x^*,T^*)$ is not a solution for Eq. \eqref{aux1}, we deduce that there exists a function $v_1\in C^1[a,b]$ such that
 $\partial g/\partial \epsilon_1(0,0)\not=0$. We can appeal to the implicit function theorem, which asserts that there exists a function
 $\epsilon_1(\cdot)$, defined on a neighborhood of zero, such that $g(\epsilon_1(\epsilon_2),\epsilon_2)=0$. Thus, there exists a subfamily
 of admissible variations satisfying the integral constraint. Attending that $j$ is minimum at $(0,0)$ subject to the constraint $g(\cdot,\cdot)=0$,
 and since $\nabla g (0,0)\not=(0,0)$, by the Lagrange multiplier rule, there exists a real number $\lambda$ such that
$$\nabla(j+\lambda g)(0,0)=(0,0).$$
In particular, $\partial (j+\lambda g)/\partial \epsilon_2(0,0)=0$. Repeating the calculations as done before, we arrive at the desired formulas.
\end{proof}

\subsection{Legendre condition}\label{sec:legendre}

We  now formulate a second-order necessary condition, usually called Legendre condition, which provides us with a necessary condition for minimization.
In \cite{Lazo}, by the first time, a Legendre type condition was obtained for fractional variational calculus. Here, we derive a similar condition to
a more general form of fractional derivative. Assume now that the Lagrange function $L$ is such that its second order partial derivatives
$\partial^2_{ij}L$, with $i,j\in\{2,3\}$, exist and are continuous.

\begin{theorem} Suppose that $(x^*,T^*)$ is a local minimum for $J$ as in \eqref{funct1} on the space $\Omega\times[a,b]$. Then for all $ t \in[a,T^*]$,
$$\partial^2_{33}L[x^*](t)\geq0. $$
\end{theorem}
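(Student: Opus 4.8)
The plan is to mimic the classical proof of the Legendre necessary condition, adapting the second-variation analysis to the fractional setting. First I would fix the terminal time by taking admissible variations of the form $(x^*+\epsilon v, T^*)$ with $v\in C^1[a,b]$, $v(a)=0$ and $v(T^*)=0$, so that $\triangle T=0$ and the terminal point plays no role. Setting $j(\epsilon):=J(x^*+\epsilon v,T^*)$, the fact that $(x^*,T^*)$ is a local minimum forces $j''(0)\geq 0$. Computing the second derivative under the integral sign, and using the linearity of $\LD$ in the variation, gives
\begin{equation*}
j''(0)=\int_a^{T^*}\Bigl[\partial^2_{22}L[x^*](t)\,v(t)^2+2\,\partial^2_{23}L[x^*](t)\,v(t)\,\LD v(t)+\partial^2_{33}L[x^*](t)\,(\LD v(t))^2\Bigr]dt\geq 0.
\end{equation*}

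The core of the argument is then a localization trick: one chooses a family of variations $v_\lambda$ (depending on a large parameter $\lambda$) that concentrate near an arbitrary interior point $t_0\in(a,T^*)$ and for which $\LD v_\lambda$ becomes much larger in $L^2$-norm than $v_\lambda$ itself, so that the $\partial^2_{33}L$ term dominates. A natural candidate, following Lazo--Torres and the classical construction, is $v_\lambda(t):=E_\a\bigl(\lambda(\p(t)-\p(a))^\a\bigr)$-type bumps, or more simply a rescaled smooth bump $v_\lambda(t)=\lambda^{-\beta}\phi(\lambda(\p(t)-\p(t_0)))$ for a suitable exponent $\beta$; one then estimates, as $\lambda\to\infty$, that $\int (v_\lambda)^2\to 0$, $\int v_\lambda\,\LD v_\lambda\to 0$, while $\int (\LD v_\lambda)^2$ stays bounded away from zero (or one normalizes so that this last term tends to $\partial^2_{33}L[x^*](t_0)$ times a positive constant). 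Dividing the inequality $j''(0)\geq 0$ by $\int(\LD v_\lambda)^2$ and passing to the limit leaves $\partial^2_{33}L[x^*](t_0)\geq 0$; continuity of $\partial^2_{33}L$ and of $[x^*](\cdot)$ extends this to the closed interval $[a,T^*]$.

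The main obstacle I anticipate is controlling the fractional derivative of the concentrating variations. Unlike the classical case, where $v'_\lambda$ localizes exactly where $v_\lambda$ does, the operator $\LD$ is nonlocal: $\LD v_\lambda(t)$ picks up contributions from all of $[a,t]$, not just a neighborhood of $t_0$. One must therefore verify carefully that the "tail" contributions to $\int_a^{T^*}(\LD v_\lambda)^2\,dt$ and to the cross term are negligible, and that the singular kernel $(\p(t)-\p(\t))^{-\a}$ does not destroy the scaling heuristics. This is precisely where the paper likely invokes the Mittag-Leffler function: since $\LD E_\a(\lambda(\p(t)-\p(a))^\a)=\lambda E_\a(\lambda(\p(t)-\p(a))^\a)$ exactly, a variation built from (a smoothly cut-off piece of) such an eigenfunction turns the fractional derivative back into multiplication by $\lambda$, making the estimates $\int(\LD v_\lambda)^2\sim\lambda^2\int(v_\lambda)^2$ transparent and reducing the problem to the well-understood classical argument. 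I would therefore structure the proof around that eigenfunction construction, reserving the bulk of the work for the cut-off error estimates and the limit passage, and appealing to continuity for the final closed-interval conclusion.
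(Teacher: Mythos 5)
Your first step --- freezing the terminal time, forming $j(\epsilon):=J(x^*+\epsilon v,T^*)$ and extracting the second-variation inequality $j''(0)\geq 0$ --- is exactly the paper's starting point, and your overall strategy (choose variations for which the $(\LD v)^2$ term dominates the other two) is the right one. The gap is in the construction you commit to. The Mittag--Leffler eigenfunction $E_\a(\lambda(\p(t)-\p(a))^\a)$ does not localize at an arbitrary interior point $t_0$: it is an increasing function anchored at $a$ whose square, and the square of whose fractional derivative, concentrate at the right end of the interval of integration, so the weighted average you obtain in the limit sees $\partial^2_{33}L[x^*]$ near $T^*$, not near $t_0$; it also fails $v(a)=0$. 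Worse, the identity $\LD E_\a(\lambda(\p(t)-\p(a))^\a)=\lambda E_\a(\lambda(\p(t)-\p(a))^\a)$ is destroyed by any cutoff precisely because $\LD$ is nonlocal --- which is the very obstacle you flagged; the eigenfunction does not resolve it. Your alternative, a rescaled bump $\lambda^{-\beta}\phi(\lambda(\p(t)-\p(t_0)))$, is closer to viable, but you leave unproved the two estimates that constitute essentially the whole proof: (i) that the tails of $\LD v_\lambda$ to the right of the bump's support contribute negligibly to $\int(\LD v_\lambda)^2$ and to the cross term, and (ii) that the scaling $\int(\LD v_\lambda)^2\gg\int v_\lambda\,\LD v_\lambda\gg\int v_\lambda^2$ actually holds for a Caputo derivative anchored at $a$ rather than at the left edge of the bump.

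The paper sidesteps both difficulties with an explicit construction: arguing by contradiction, it takes a small interval $[c,d]$ around a point where $\partial^2_{33}L<0$, bounds the three second derivatives there by constants $C_1,C_2,C_3$ with $C_3<0$, and builds a bump $h$ as a linear combination of the powers $(\p(t)-\p(c))^{\a+1},\dots,(\p(t)-\p(c))^{\a+4}$, extended by zero outside $[c,d]$. Because $h\equiv 0$ on $[a,c]$, the derivative $\LD$ coincides with ${^CD_{c+}^{\a,\p}}$ on the support, and the formula $\LD(\p(t)-\p(a))^{\beta-1}=\frac{\Gamma(\beta)}{\Gamma(\beta-\a)}(\p(t)-\p(a))^{\beta-\a-1}$ makes ${^CD_{c+}^{\a,\p}}h$ exactly computable. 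The localization is then a scaling in the interval width rather than in a frequency parameter: $h=O((\p(d)-\p(c))^{\a+1})$ while ${^CD_{c+}^{\a,\p}}h=O(\p(d)-\p(c))$, so the three terms of the second variation carry the factors $(\p(d)-\p(c))^{2\a+2}$, $(\p(d)-\p(c))^{\a+2}$ and $(\p(d)-\p(c))^{2}$ respectively; as $d\to c$ the last (negative) one dominates and yields the contradiction. To salvage your plan, replace the eigenfunction by this kind of compactly supported power-function bump on a shrinking interval: that is what makes the nonlocal derivative simultaneously computable and controllable.
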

\begin{proof} Let us consider variations over $x^*$ only, that is, we restrict to the case $\triangle T=0$. So, if we consider
$j(\epsilon):=J(x^*+\epsilon v,T^*)$, we have $j''(0)\geq0$, and so we conclude that
\begin{equation}\label{aux2}
\int_a^{T^*}\left[\partial^2_{22}L[x^*](t)\cdot v^2(t)+2\,\partial^2_{23}L[x^*](t)\cdot v(t)\LD v(t)+\partial^2_{33}L[x^*](t)\cdot
\left(\LD v(t)\right)^2\right]\,dt\geq0.
\end{equation}
Suppose that the Legendre condition is violated at some some $t_0\in[a,T^*]$:
$$\partial^2_{33}L[x^*](t_0)<0.$$
Then, there exists a subinterval $[c,d]\subseteq [a,T^*]$ and three real constants $C_1,C_2,C_3$ with $C_3<0$ such that
$$\partial^2_{22}L[x^*](t)\leq C_1, \quad \partial^2_{23}L[x^*](t)\leq C_2, \quad \partial^2_{33}L[x^*](t)\leq C_3, $$
for all $t \in[c,d]$. Define the function $h:[c,d]\to\mathbb R$ by the formula
\begin{align*}h(t)&:= (\a+2)(\p(t)-\p(c))^{\a+1}-2\frac{\a+4}{\p(d)-\p(c)}(\p(t)-\p(c))^{\a+2}\\
    & \quad +\frac{\a+10}{(\p(d)-\p(c))^2}(\p(t)-\p(c))^{\a+3}-\frac{4}{(\p(d)-\p(c))^3}(\p(t)-\p(c))^{\a+4}.
\end{align*}
Then, $h(c)=0=h(d)$ and $h'(c)=0=h'(d)$.
Once
\begin{align*}{^CD_{c+}^{\a,\p}} h(t)&= \Gamma(\a+3)(\p(t)-\p(c))-\frac{(\a+4)\Gamma(\a+3)}{\p(d)-\p(c)}(\p(t)-\p(c))^{2}\\
    & \quad +\frac{(\a+10)\Gamma(\a+4)}{6(\p(d)-\p(c))^2}(\p(t)-\p(c))^{3}-\frac{\Gamma(\a+5)}{6(\p(d)-\p(c))^3}(\p(t)-\p(c))^{4},
\end{align*}
we also have ${^CD_{c+}^{\a,\p}} h(c)=0={^CD_{c+}^{\a,\p}} h(d)$. Besides this, for every $t\in[c,d]$,
\begin{align*}h(t)&\leq  (\a+2)(\p(t)-\p(c))^{\a+1}+\frac{\a+10}{(\p(d)-\p(c))^2}(\p(t)-\p(c))^{\a+3}\\
 & \leq (2\a+12) (\p(d)-\p(c))^{\a+1}\leq 14  (\p(d)-\p(c))^{\a+1}
\end{align*}
and
\begin{align*}{^CD_{c+}^{\a,\p}} h(t)&\leq \Gamma(\a+3)(\p(t)-\p(c))+\frac{(\a+10)\Gamma(\a+4)}{6(\p(d)-\p(c))^2}(\p(t)-\p(c))^{3}\\
  & \leq \frac{\Gamma(\a+3)(\a^2+13\a+36)}{6}(\p(d)-\p(c))\leq 50(\p(d)-\p(c)).
\end{align*}
Define the function $v:[a,b]\to\mathbb R$ by the rule
$$v(t):=\left\{\begin{array}{ll}
h(t), & \quad \mbox{if } t \in [c,d]\\
0, & \quad \mbox{if } t \notin [c,d].\\
\end{array}\right.$$
By the properties of function $h$, we have that $v\in C^1[a,b]$, $v(a)=0$ and
$$ \LD v(t):=\left\{\begin{array}{ll}
{^CD_{c+}^{\a,\p}}v(t), & \quad \mbox{if } t \in [c,d]\\
0, & \quad \mbox{if } t \notin[c,d].\\
\end{array}\right.$$
Note that, for $t>d$, $\LD v(t)= {^CD_{c+}^{\a,\p}}h(d)=0$. Replacing this variation into Eq. \eqref{aux2}, we get
\begin{align*}0&\leq\int_a^{T^*}\left[\partial^2_{22}L[x^*](t)\cdot v^2(t)+2\,\partial^2_{23}L[x^*](t)\cdot v(t)\LD v(t)+\partial^2_{33}L[x^*](t)\cdot(\LD v(t))^2\right]\,dt\\
&\leq \int_a^{T^*}\left[ 14^2 C_1(\p(d)-\p(c))^{2\a+2}+2\cdot 14\cdot 50 C_2(\p(d)-\p(c))^{\a+2}+50^2C_3 (\p(d)-\p(c))^2 \right]\,dt\\
&=(\p(d)-\p(c))^2 (T^*-a)\left[ 196 C_1 (\p(d)-\p(c))^{2\a}+1400C_2(\p(d)-\p(c))^\a+2500C_3 \right]<0\end{align*}
if we assume that $|d-c|\ll1$, and thus we obtain a contradiction.
\end{proof}

\subsection{Infinite horizon problem}\label{sec:infinite}

We study now a new problem, important when we want to consider the effects at a long term.  This issue is especially pertinent when the
 planning horizon is assumed to be of infinite length.
The objective functional is given by an  improper integral, the initial state $x(a)$ is fixed and the terminal state (at infinity) is free, that is,
no constraints are imposed on the behaviour of the admissible trajectories at large times.
 This kind of problems are known as infinite horizon problems, where the objective functional is given by
\begin{equation}\label{funct2}
J(x):=\int_a^\infty L(t,x(t),\LD x(t))\,dt, \quad x\in\Omega_\infty,
\end{equation}
where $\Omega_\infty$ is the set
$$\Omega_\infty:=\{x\in C^1[a,\infty): x(a)\, \mbox{ is a fixed real}\},$$
endowed with the norm
$$\|x\|_{\Omega_\infty}:=\sup_{t\in[a,\infty)}|x(t)|+\sup_{t\in[a,\infty)}\left|\LD x(t)\right|.$$
We have to be careful when defining a minimal curve for functional \eqref{funct2}, since any admissible function for which the improper integral
 diverges to $-\infty$ would be a minimal path, according to the usual definition of minimum.
Here, we follow the one presented in \cite{brock}.
A curve $x^*$ in $\Omega_\infty$ is a local weakly minimal for $J$ as in \eqref{funct2} if there exists some $ \epsilon>0$ such that, for all
 $x\in \Omega_\infty$, if $\|x^*-x\|_{\Omega_\infty}<\epsilon$, then the lower limit
$$\lim_{T\to\infty}\inf_{T^* \geq T}\int_{a}^{T^*}[L[x^*](t)-L[x](t)] \, dt \leq 0.$$
For the following result, we will need some extra functions. Fixed two functions $x^*,v \in C^1[a,\infty)$, and given $|\epsilon|\ll 1$ and
$T^*\geq a$, define
\begin{align*}
 A(\epsilon, T^*)& := \int_a^{T^*} \frac{L[x^*+\epsilon v](t)- L[x^*](t)}{\epsilon}dt;\\
V(\epsilon, T)& := \inf_{T^* \geq T}\int_a^{T^*} [L[x^*+\epsilon v](t)-L[x^*](t)]dt;\\
W(\epsilon)&:= \lim_{T\to\infty} V(\epsilon,T).
\end{align*}

\begin{theorem} Let $x^*$ be a local weakly minimal for $J$ as in \eqref{funct2}. Suppose that:
\begin{enumerate}
\item $\displaystyle \lim_{\epsilon \to 0} \frac{V(\epsilon, T) }{\epsilon}$ exists for all $T$;
\item $\displaystyle \lim_{T\to\infty}\frac{V(\epsilon, T) }{\epsilon}$ exists uniformly for all $\epsilon$;
\item For every $T^*> a$ and $\epsilon\not=0$, there exists a sequence $\left(A(\epsilon, T^*_n)\right)_{n \in \mathbb{N}}$
such that
$$\lim_{n \to \infty} A(\epsilon, T^*_n)= \inf_{T^*\geq T} A(\epsilon, T^*)$$
uniformly for $\epsilon$.
\end{enumerate}
If there exists and is continuous the function $t\mapsto {D_{T^*-}^{\a,\p}}\left(\partial_3 L[x^*](t)/\p'(t)\right)$ on $[a,T^*]$, for all $T^*\geq t\geq a$,
 then
$$\partial_2 L[x^*](t)+{D_{T^*-}^{\a,\p}}\left(\frac{\partial_3 L[x^*](t)}{\p'(t)}\right)\p'(t)=0,$$
for all $T^*\geq t\geq a$. Also, we have
$$\lim_{T\to\infty}\inf_{T^* \geq T}{I_{T^*-}^{1-\a,\p}}\left(\frac{\partial_3 L[x^*](t)}{\p'(t)}\right)=0 \quad \mbox{at} \, \, t=T^*.$$
\end{theorem}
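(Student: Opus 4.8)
The plan is to reproduce the scheme of Theorem~\ref{teo:main} --- first variation, fractional integration by parts, fundamental lemma --- but to use the three auxiliary functions $A,V,W$ to carry out the passage to the limit $T\to\infty$. Fix once and for all a variation $v\in C^1[a,\infty)$ with $v(a)=0$, consider the curves $x^*+\epsilon v$ for $|\epsilon|\ll1$, and set $j_\epsilon(T^*):=\int_a^{T^*}\bigl[L[x^*+\epsilon v](t)-L[x^*](t)\bigr]\,dt$, so that $A(\epsilon,T^*)=j_\epsilon(T^*)/\epsilon$, $V(\epsilon,T)=\inf_{T^*\geq T}j_\epsilon(T^*)$, and $W(\epsilon)=\lim_{T\to\infty}V(\epsilon,T)$. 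Since $W(0)=0$, the first goal is to show that the weakly-minimal property --- tested against the competitors $x^*\pm\epsilon v$ --- together with hypotheses (1)--(3) forces the stationarity $W'(0)=0$.

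To reach that I would argue in three moves. By hypothesis~(1), $V(\epsilon,T)/\epsilon$ has a limit as $\epsilon\to0$ for each fixed $T$; by hypothesis~(2), the convergence $V(\epsilon,T)/\epsilon\to W(\epsilon)/\epsilon$ as $T\to\infty$ is uniform in $\epsilon$; a Moore--Osgood (iterated-limit) argument then gives
$$W'(0)=\lim_{\epsilon\to0}\frac{W(\epsilon)}{\epsilon}=\lim_{T\to\infty}\lim_{\epsilon\to0}\frac{V(\epsilon,T)}{\epsilon}.$$
Next, for fixed $T$, hypothesis~(3) lets me realize $\inf_{T^*\geq T}A(\epsilon,T^*)$ as the limit of the values $A(\epsilon,T^*_n)$ along a sequence, uniformly in $\epsilon$; passing $\epsilon\to0$ inside, and computing $\lim_{\epsilon\to0}A(\epsilon,T^*)$ by the usual differentiation under the integral sign (justified by the continuity of $\partial_2L$ and $\partial_3L$), I obtain that $\lim_{\epsilon\to0}V(\epsilon,T)/\epsilon$ equals the infimum over the windows $[a,T^*]$, $T^*\geq T$, of the classical first variation $\int_a^{T^*}\bigl[\partial_2L[x^*](t)\,v(t)+\partial_3L[x^*](t)\,\LD v(t)\bigr]\,dt$. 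Combining, the stationarity $W'(0)=0$ reads
$$\lim_{T\to\infty}\inf_{T^*\geq T}\int_a^{T^*}\Bigl[\partial_2L[x^*](t)\,v(t)+\partial_3L[x^*](t)\,\LD v(t)\Bigr]\,dt=0$$
for every admissible $v$.

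Finally I would insert the fractional integration-by-parts identity of Theorem~\ref{integrationparts} with $n=1$ on each window $[a,T^*]$, which rewrites the inner integral as
\begin{align*}
&\int_a^{T^*}\left[\partial_2L[x^*](t)+{D_{T^*-}^{\a,\p}}\left(\frac{\partial_3L[x^*](t)}{\p'(t)}\right)\p'(t)\right]v(t)\,dt\\
&\qquad+{I_{T^*-}^{1-\a,\p}}\left(\frac{\partial_3L[x^*](t)}{\p'(t)}\right)\Big|_{t=T^*}\,v(T^*),
\end{align*}
the contribution at $t=a$ having dropped because $v(a)=0$. Restricting first to variations $v$ supported in the interior of $(a,T^*)$ (so that the $T^*$-boundary term vanishes as well) and then arguing exactly as in Theorem~\ref{teo:main}, via the fundamental lemma of the calculus of variations \cite[Lemma 2.2.2]{Brunt} applied on $[a,T^*]$, one peels off the Euler--Lagrange equation $\partial_2L[x^*](t)+{D_{T^*-}^{\a,\p}}\bigl(\partial_3L[x^*](t)/\p'(t)\bigr)\p'(t)=0$; since $T^*$ was arbitrary, this holds for every $T^*\geq t\geq a$. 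With the Euler--Lagrange equation in force the integral above vanishes identically, so the stationarity collapses to $\lim_{T\to\infty}\inf_{T^*\geq T}{I_{T^*-}^{1-\a,\p}}\bigl(\partial_3L[x^*](t)/\p'(t)\bigr)\big|_{t=T^*}\,v(T^*)=0$; choosing a variation with $v\equiv1$ for large $t$ then yields the transversality condition at infinity, $\lim_{T\to\infty}\inf_{T^*\geq T}{I_{T^*-}^{1-\a,\p}}\bigl(\partial_3L[x^*](t)/\p'(t)\bigr)\big|_{t=T^*}=0$.

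I expect the analytic bookkeeping, not the algebra, to be the main obstacle. Two points need genuine care: making the two interchanges of limits (in $\epsilon$ and in $T$) rigorous --- precisely the role of hypotheses (1)--(3), where the uniformity clauses must be used carefully and one must match the sequence $(T^*_n)$ from (3) with the infimum over windows --- and, above all, passing from the single asymptotic identity ``$\lim_{T\to\infty}\inf_{T^*\geq T}(\cdots)=0$'' to a bona fide pointwise Euler--Lagrange equation on every finite window. The latter is delicate because the Riemann--Liouville operator ${D_{T^*-}^{\a,\p}}$ carries the moving endpoint $T^*$, so the test variations have to be localized inside $(a,T^*)$, vanishing with their first derivative at both ends, before the fundamental lemma can be applied; one must also confirm that this restricted family of variations is still rich enough to force the equation and, separately, that the ``lower-limit'' phrasing in the definition of weakly minimal indeed delivers $W'(0)=0$ rather than merely a one-sided inequality.
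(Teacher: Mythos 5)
Your proposal follows essentially the same route as the paper's proof: stationarity $W'(0)=0$ from the weak-minimality of $x^*$, interchange of the limits in $\epsilon$ and $T$ (and of the limit with the infimum over windows) justified by hypotheses (1)--(3), fractional integration by parts on each window $[a,T^*]$, and then localized variations plus the fundamental lemma to extract the Euler--Lagrange equation before reading off the transversality condition at infinity. The points you flag as delicate (the sign/one-sidedness in the definition of weakly minimal, and the passage from the asymptotic identity to a pointwise equation on every window) are exactly the steps the paper treats briefly, citing an external reference for the latter, so your outline is consistent with the published argument.
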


\begin{proof} By the definition of minimum curve for the infinite horizon problem, we have that $W(\epsilon)\geq0$ in a neighborhood of zero,
and $W(0)=0$. Thus, $W'(0)=0$ and so we have the following:
\begin{align*}
0 &=\lim_{\epsilon\to 0} \frac{W(\epsilon)}{\epsilon}= \lim_{\epsilon\to 0}\lim_{T\to \infty}\frac{V(\epsilon, T) }{\epsilon}\\
&= \lim_{T\to\infty}\lim_{\epsilon\to 0} \frac{V(\epsilon, T) }{\epsilon}=\lim_{T\to\infty}\lim_{\epsilon\to 0}\inf_{T^* \geq T} A(\epsilon, T^*)\\
&= \lim_{T\to\infty}\lim_{\epsilon\to 0} \lim_{n\to \infty} A(\epsilon, T^*_n)=\lim_{T\to\infty}\lim_{n\to \infty} \lim_{\epsilon\to 0} A(\epsilon, T^*_n) \\
&= \lim_{T\to\infty}\inf_{T^*\geq T} \lim_{\epsilon\to 0} A(\epsilon, T^*)=\lim_{T\to\infty}\inf_{T^* \geq T}\lim_{\epsilon\to 0}\int_{a}^{T^*} \frac{L[ x^*
 + \epsilon v](t)- L[x^*](t)}{\epsilon} \, dt\\
&= \lim_{T\to\infty}  \inf_{T^* \geq T}\int_{a}^{T^*}\left[\partial_2 L[x^*](t) \cdot v(t)+\partial_3 L[x^*](t)  \cdot \LD v(t)\right]dt\\
&= \lim_{T\to\infty}  \inf_{T^* \geq T}\left[\int_a^{T^*}\left[\partial_2 L[x^*](t)+{D_{T^*-}^{\a,\p}}
\left(\frac{\partial_3 L[x^*](t)}{\p'(t)}\right)\p'(t)\right]v(t)\,dt\right.\\
& \quad \left.+\left[{I_{T^*-}^{1-\a,\p}}\left(\frac{\partial_3 L[x^*](t)}{\p'(t)}\right)\cdot v(t)\right]_{t=a}^{t=T^*}\right].
\end{align*}
If we assume that $v(T^*)=0$, we deduce that
$$\lim_{T\to\infty}  \inf_{T^* \geq T}\int_a^{T^*}\left[\partial_2 L[x^*](t)+{D_{T^*-}^{\a,\p}}\left(\frac{\partial_3 L[x^*](t)}{\p'(t)}\right)
\p'(t)\right]v(t)\,dt=0,$$
and so (see \cite{AlmeidaIn})
$$\partial_2 L[x^*](t)+{D_{T^*-}^{\a,\p}}\left(\frac{\partial_3 L[x^*](t)}{\p'(t)}\right)\p'(t)=0,$$
for all $t\geq a$ and for all $T^* \geq t$. Also, using this last condition, we get
$$\lim_{T\to\infty}\inf_{T^* \geq T}{I_{T^*-}^{1-\a,\p}}\left(\frac{\partial_3 L[x^*](t)}{\p'(t)}\right)=0 \quad \mbox{at} \, \, t=T^*.$$
\end{proof}

\subsection{Variational principles with delay}\label{sec:delay}

In this section we consider time-delay variational problems. This is an important subject, since in many systems there is almost always
a time delay \cite{Richard,Salamon}. A natural generalization of such theory is to replace ordinary derivatives by fractional derivatives,
since fractional operators contain memory, and their present state is determined by all past states. There exist
already some works dealing with fractional operators, for example \cite{AlmeidaD,BaleanuD1,BaleanuD2}.
Let $L:[a,b]\times\mathbb R^3\to\mathbb R$ be a continuous function such that there exist and are continuous the functions $\partial_i L$, for $i=2,3,4$.
Given $\t>0$ such that $\t<b-a$, define the functional
\begin{equation}\label{functDelay}
J(x,T):=\int_a^T L(t,x(t),x(t-\t),\LD x(t))\,dt, \quad (x,T)\in\Omega_\t\times[a,b],
\end{equation}
where
$$\Omega_\t:=\{x\in C^1[a-\t,b]: x(t)=\theta(t)\, \mbox{ for }\, t\in[a-\t,a]\},$$
and $\theta$ is a given function. Let $[x]_\t$ denote the vector
$$[x]_\t(t):=(t,x(t),x(t-\t),\LD x(t)).$$

\begin{theorem}\label{teo:Delay} Let the pair $(x^*,T^*)$ be local minimum for $J$ as in \eqref{functDelay}.
If there exist and are continuous the functions $t\mapsto {D_{(T^*-\t)-}^{\a,\p}}\left(\partial_4 L[x^*](t)/\p'(t)\right)$ and $t\mapsto {D_{T^*-}^{\a,\p}}\left(\partial_4 L[x^*](t)/\p'(t)\right)$
 on $[a,T^*]$, then for all $t\in[a,T^*-\t]$,
$$\partial_2 L[x^*]_\t(t)+\partial_3 L[x^*]_\t(t+\t)+{D_{(T^*-\t)-}^{\a,\p}}\left(\frac{\partial_4 L[x^*]_\t(t)}{\p'(t)}\right)\p'(t)$$
$$-\frac{1}{\Gamma(1-\a)}\left(\frac{1}{\p'(t)}\frac{d}{dt}\right)\int_{T^*-\t}^{T^*}\p'(s)(\p(s)-\p(t))^{-\a}\frac{\partial_4 L[x^*]_\t(s)}{\p'(s)}
\, ds \cdot \p'(t)=0,$$
and for all $t\in[T^*-\t,T^*]$,
$$\partial_2 L[x^*]_\t(t)+{D_{T^*-}^{\a,\p}}\left(\frac{\partial_4 L[x^*]_\t(t)}{\p'(t)}\right)\p'(t)=0.$$
Also, at $t=T^*$, it is true that
$$\left\{
\begin{array}{l}
{I_{T^*-}^{1-\a,\p}}\left(\frac{\partial_4 L[x^*]_\t(t)}{\p'(t)}\right)=0\\
L[x^*]_\t(t)=0.
\end{array}
\right.$$
\end{theorem}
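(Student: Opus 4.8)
The plan is to mimic the first-variation argument of Theorem \ref{teo:main}, but now carefully track the contribution of the delayed argument $x(t-\t)$ and the fact that the fractional derivative $\LD v$ involves the whole history of $v$ on $[a,t]$. First I would fix an admissible variation $(x^*+\epsilon v, T^*+\epsilon\triangle T)$ with $v\in C^1[a-\t,b]$ and $v(t)=0$ on $[a-\t,a]$ (so that the pair stays in $\Omega_\t$), set $j(\epsilon):=J(x^*+\epsilon v, T^*+\epsilon\triangle T)$, and use $j'(0)=0$. Differentiating under the integral sign and accounting for the variable upper limit gives
\begin{align*}
0 &= \int_a^{T^*}\Big[\partial_2 L[x^*]_\t(t)\,v(t)+\partial_3 L[x^*]_\t(t)\,v(t-\t)+\partial_4 L[x^*]_\t(t)\,\LD v(t)\Big]\,dt\\
&\quad +\triangle T\cdot L[x^*]_\t(T^*).
\end{align*}

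Next I would perform the standard change of variable $t\mapsto t+\t$ in the middle term. Since $v$ vanishes on $[a-\t,a]$, the integral $\int_a^{T^*}\partial_3 L[x^*]_\t(t)\,v(t-\t)\,dt$ becomes $\int_{a}^{T^*-\t}\partial_3 L[x^*]_\t(t+\t)\,v(t)\,dt$, which is why the term $\partial_3 L[x^*]_\t(t+\t)$ appears only on the subinterval $[a,T^*-\t]$. For the fractional term I would split the integral $\int_a^{T^*}\partial_4 L[x^*]_\t(t)\,\LD v(t)\,dt$ as $\int_a^{T^*-\t}+\int_{T^*-\t}^{T^*}$ and apply the integration-by-parts formula of Theorem \ref{integrationparts} (with $n=1$, so the boundary sum collapses to the single $k=0$ term ${I^{1-\a,\p}_{b-}}$) on each piece. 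On $[T^*-\t,T^*]$ this directly produces ${D_{T^*-}^{\a,\p}}(\partial_4 L[x^*]_\t/\p')\,\p'$ together with the boundary term at $t=T^*$. On $[a,T^*-\t]$ the integration by parts produces ${D_{(T^*-\t)-}^{\a,\p}}(\partial_4 L[x^*]_\t/\p')\,\p'$ plus a boundary term at $t=T^*-\t$; crucially, this does not yet account for the influence of $\LD v(t)$ for $t\in[T^*-\t,T^*]$ on the values of $v$ at earlier times, so I must add a correction. The cleanest way is to write, for $t\le T^*-\t$, the portion of the dual Riemann--Liouville derivative coming from the "far" interval explicitly: the difference ${D_{T^*-}^{\a,\p}}-{D_{(T^*-\t)-}^{\a,\p}}$ acting on $\partial_4 L[x^*]_\t/\p'$ equals $\frac{1}{\Gamma(1-\a)}\frac{1}{\p'(t)}\frac{d}{dt}\int_{T^*-\t}^{T^*}\p'(s)(\p(s)-\p(t))^{-\a}\frac{\partial_4 L[x^*]_\t(s)}{\p'(s)}\,ds$, which is exactly the extra term displayed in the statement. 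Collecting everything, the first variation takes the form
\begin{align*}
0 &= \int_a^{T^*-\t}\Big[\partial_2 L[x^*]_\t(t)+\partial_3 L[x^*]_\t(t+\t)+{D_{(T^*-\t)-}^{\a,\p}}\!\Big(\tfrac{\partial_4 L[x^*]_\t(t)}{\p'(t)}\Big)\p'(t)\\
&\qquad\qquad -\tfrac{1}{\Gamma(1-\a)}\tfrac{1}{\p'(t)}\tfrac{d}{dt}\!\int_{T^*-\t}^{T^*}\!\p'(s)(\p(s)-\p(t))^{-\a}\tfrac{\partial_4 L[x^*]_\t(s)}{\p'(s)}\,ds\cdot\p'(t)\Big]v(t)\,dt\\
&\quad +\int_{T^*-\t}^{T^*}\Big[\partial_2 L[x^*]_\t(t)+{D_{T^*-}^{\a,\p}}\!\Big(\tfrac{\partial_4 L[x^*]_\t(t)}{\p'(t)}\Big)\p'(t)\Big]v(t)\,dt\\
&\quad +{I_{T^*-}^{1-\a,\p}}\!\Big(\tfrac{\partial_4 L[x^*]_\t(t)}{\p'(t)}\Big)v(t)\Big|_{t=T^*}+\triangle T\cdot L[x^*]_\t(T^*),
\end{align*}
where the boundary contributions at $t=a$ vanish because $v(a)=0$ and the ones at $t=T^*-\t$ cancel between the two pieces (one should check this cancellation is exactly what the correction term is engineered to provide; this bookkeeping is the one place where sign errors are easy).

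Finally I would extract the conclusions by the usual localization argument. Taking $\triangle T=0$ and $v$ supported in the interior of $[a,T^*-\t]$, the fundamental lemma of the calculus of variations (cf. \cite[Lemma 2.2.2]{Brunt}) gives the first Euler--Lagrange equation on $[a,T^*-\t]$; taking $\triangle T=0$ and $v$ supported in the interior of $[T^*-\t,T^*]$ gives the second equation on $[T^*-\t,T^*]$. With both equations in force the bulk integrals drop out, leaving ${I_{T^*-}^{1-\a,\p}}(\partial_4 L[x^*]_\t/\p')\big|_{t=T^*}\,v(T^*)+\triangle T\cdot L[x^*]_\t(T^*)=0$; since $v(T^*)$ and $\triangle T$ are independent free parameters, both transversality conditions at $t=T^*$ follow. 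I expect the main obstacle to be the correct manipulation of the fractional term on $[a,T^*-\t]$: one must legitimately split the operator ${D_{T^*-}^{\a,\p}}$ into a "near" part that behaves like ${D_{(T^*-\t)-}^{\a,\p}}$ and a smooth "far" part (the latter being a genuine $C^1$ function of $t$ on $[a,T^*-\t]$, since the singularity $(\p(s)-\p(t))^{-\a}$ is integrable and bounded away from zero there), differentiate the far part under the integral sign, and verify that the boundary terms at $t=T^*-\t$ arising from the two applications of Theorem \ref{integrationparts} indeed cancel — this is the delicate step, and the rest is routine.
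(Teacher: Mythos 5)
Your overall architecture is right --- first variation, shift of the delayed term, splitting of the fractional contribution at $T^*-\t$ via a kernel identity, then localization and the fundamental lemma --- and your final displayed first-variation identity coincides with the one the paper obtains. But the route you describe through the fractional term has a genuine flaw: you propose to split $\int_a^{T^*}\partial_4 L[x^*]_\t(t)\,\LD v(t)\,dt$ at $T^*-\t$ and to apply Theorem \ref{integrationparts} \emph{separately on each piece}, claiming that on $[T^*-\t,T^*]$ this ``directly produces'' ${D_{T^*-}^{\a,\p}}$. It does not: Theorem \ref{integrationparts} requires the interval of integration to match the anchor points of the two derivatives (left Caputo at the lower endpoint, right Riemann--Liouville at the upper endpoint), whereas on $[T^*-\t,T^*]$ your integrand still contains $\LD v={^CD_{a+}^{\a,\p}}v$, anchored at $a$. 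Applying the formula there would require first rewriting ${^CD_{a+}^{\a,\p}}v$ in terms of ${^CD_{(T^*-\t)+}^{\a,\p}}v$ plus a history term, which introduces extra contributions you have not accounted for; this is precisely why you are forced to conjecture a cancellation of boundary terms at $t=T^*-\t$ that you cannot verify.

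The paper avoids the problem by reversing the order of operations: it integrates by parts \emph{once} over the whole interval $[a,T^*]$ (a legitimate use of Theorem \ref{integrationparts} with $b=T^*$), obtaining the single bulk term $\int_a^{T^*}{D_{T^*-}^{\a,\p}}\left(\partial_4 L[x^*]_\t(t)/\p'(t)\right)\p'(t)v(t)\,dt$ and the single boundary bracket $\left[{I_{T^*-}^{1-\a,\p}}\left(\partial_4 L[x^*]_\t(t)/\p'(t)\right)v(t)\right]_a^{T^*}$, and only \emph{then} splits the bulk integral at $T^*-\t$, substituting on $[a,T^*-\t]$ the pointwise identity
$${D_{T^*-}^{\a,\p}}f(t)={D_{(T^*-\t)-}^{\a,\p}}f(t)-\frac{1}{\Gamma(1-\a)}\left(\frac{1}{\p'(t)}\frac{d}{dt}\right)\int_{T^*-\t}^{T^*}\p'(s)(\p(s)-\p(t))^{-\a}f(s)\,ds.$$
No boundary terms at $t=T^*-\t$ ever arise, so there is nothing to cancel. (Note also that your prose asserts ${D_{T^*-}^{\a,\p}}-{D_{(T^*-\t)-}^{\a,\p}}$ equals the \emph{positive} of that integral expression, which is off by a sign, although your final display carries the correct minus sign.) With this reordering, your remaining steps --- the change of variable in the $\partial_3 L$ term using $v\equiv 0$ on $[a-\t,a]$, and the extraction of the two Euler--Lagrange equations and the two transversality conditions from the arbitrariness of $v$ and $\triangle T$ --- go through exactly as you wrote them and agree with the paper.
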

\begin{proof} Let $v\in C^1[a-\t,b]$ be such that $v(t)=0$, for all $t\in[a-\t,a],$ and consider variations of the form
$(x^*+\epsilon v,T^*+\epsilon \triangle T)$. Since the first variation of the functional must vanish at an extremum point, we have
$$\int_a^{T^*}\left[\partial_2 L[x^*]_\t(t) \cdot v(t)+\partial_3 L[x^*]_\t(t) \cdot v(t-\t)+
\partial_4 L[x^*](t)  \cdot \LD v(t)\right]dt+\triangle T\cdot L[x^*](T^*)=0.$$
Observe that
$$\int_a^{T^*} \partial_3 L[x^*]_\t(t) \cdot v(t-\t)\,dt=\int_a^{T^*-\t} \partial_3 L[x^*]_\t(t+\t) \cdot v(t)\,dt$$
since $v(t)=0$, for all $t\in[a-\t,a]$.
Also, since
\begin{align*}{D_{T^*-}^{\a,\p}}\left(\frac{\partial_4 L[x^*]_\t(t)}{\p'(t)}\right)&
={D_{(T^*-\t)-}^{\a,\p}}\left(\frac{\partial_4 L[x^*]_\t(t)}{\p'(t)}\right)\\
& \quad-\frac{1}{\Gamma(1-\a)}\left(\frac{1}{\p'(t)}\frac{d}{dt}\right)\int_{T^*-\t}^{T^*}\p'(s)(\p(s)-\p(t))^{-\a}\frac{\partial_4 L[x^*]_\t(s)}{\p'(s)}\,ds
\end{align*}
we obtain the following
$$\int_a^{T^*} \partial_4 L[x^*]_\t(t) \cdot \LD v(t)\,dt=\int_a^{T^*} {D_{T^*-}^{\a,\p}}\left(\frac{\partial_4 L[x^*]_\t(t)}{\p'(t)}\right)\p'(t)
\cdot v(t)\,dt+\left[ {I_{T^*-}^{1-\a,\p}}\left(\frac{\partial_4 L[x^*]_\t(t)}{\p'(t)}\right)\cdot v(t)\right]_a^{T^*}$$
$$=\int_a^{T^*-\t}\left[{D_{(T^*-\t)-}^{\a,\p}}\left(\frac{\partial_4 L[x^*]_\t(t)}{\p'(t)}\right)\p'(t)
-\frac{1}{\Gamma(1-\a)}\left(\frac{1}{\p'(t)}\frac{d}{dt}\right)\int_{T^*-\t}^{T^*}\p'(s)(\p(s)-\p(t))^{-\a} \frac{\partial_4 L[x^*]_\t(s)}{\p'(s)}\,ds\right.$$
$$\cdot \p'(t)\Bigg]v(t)\,dt
+\int_{T^*-\t}^{T^*} {D_{T^*-}^{\a,\p}}\left(\frac{\partial_4 L[x^*]_\t(t)}{\p'(t)}\right)\p'(t)
\cdot v(t)\,dt+\left[ {I_{T^*-}^{1-\a,\p}}\left(\frac{\partial_4 L[x^*]_\t(t)}{\p'(t)}\right)\cdot v(t)\right]_a^{T^*}.$$
Finally, combining all the previous formulas, we obtain
$$\int_a^{T^*-\t}\left[ \partial_2 L[x^*]_\t(t)+\partial_3 L[x^*]_\t(t+\t)+{D_{(T^*-\t)-}^{\a,\p}}
\left(\frac{\partial_4 L[x^*]_\t(t)}{\p'(t)}\right)\p'(t)\right.$$
$$\left. -\frac{1}{\Gamma(1-\a)}\left(\frac{1}{\p'(t)}\frac{d}{dt}\right)\int_{T^*-\t}^{T^*}\p'(s)(\p(s)-\p(t))^{-\a}\frac{\partial_4 L[x^*]_\t(s)}{\p'(s)}
\, ds \cdot \p'(t) \right]v(t)dt$$
$$+\int_{T^*-\t}^{T^*}\left[ \partial_2 L[x^*]_\t(t)+{D_{T^*-}^{\a,\p}}\left(\frac{\partial_4 L[x^*]_\t(t)}{\p'(t)}\right)\p'(t)\right]v(t)dt$$
$$+\left[ {I_{T^*-}^{1-\a,\p}}\left(\frac{\partial_4 L[x^*]_\t(t)}{\p'(t)}\right)\cdot v(t)\right]_a^{T^*}+\triangle T\cdot L[x^*]_\t(T^*)=0.$$
Since $v$ is arbitrary on the interval $[a,T^*]$, as well as $\triangle T$, we obtain the desired result.
\end{proof}

\subsection{High order derivatives}\label{sec:high}

So far we considered a fractional order as a real between 0 and 1. Using similar techniques as the ones presented in the proof of Theorem
\ref{teo:main}, we can generalize the previous results in order to include high order derivatives. We show how to do it for the basic problem
 of the calculus of variations, and we deduce the respective Euler--Lagrange equation.

\begin{theorem} Consider the functional
$$J(x,T):=\int_a^T L(t,x(t),{^CD_{a+}^{\a_1,\p}} x(t),\ldots, {^CD_{a+}^{\a_m,\p}} x(t))\,dt, \quad (x,T)\in\Omega_m\times[a,b],$$
where
\begin{enumerate}
\item $m\in\mathbb N$, and for all $n\in\{1,\ldots,m\}$, we have $\a_n\in(n-1,n)$;
\item $L:[a,b]\times\mathbb R^{m+1}\to\mathbb R$ is a continuous function;
\item there exist and are continuous the functions $\partial_2 L$, $\partial_3 L$, $\ldots$, and  $\partial_{m+2} L$;
\item $\Omega_m:=\{x\in C^m[a,b]: x(a), x^{(1)}(a),\ldots, x^{(m-1)}(a) \, \mbox{ are fixed reals}\}$.
\end{enumerate}
Suppose that $(x^*,T^*)$ is a local minimum for $J$. If, for all $n\in\{1,\ldots,m\}$, there exist and are continuous the functions
$t\mapsto {D_{T^*-}^{\a_n,\p}}\left(\partial_{n+2} L[x^*](t)/\p'(t)\right)$ on $[a,T^*]$, then
$$\partial_2 L[x^*]_m(t)+\sum_{n=1}^m{D_{T^*-}^{\a_n,\p}}\left(\frac{\partial_{n+2} L[x^*]_m(t)}{\p'(t)}\right)\p'(t)=0,$$
for each $t\in[a,T^*]$, and at $t=T^*$, we have
$$\left\{
\begin{array}{l}
\sum_{n=k}^m \left(-\frac{1}{\p'(t)}\frac{d}{dt}\right)^{n-k}{I_{T^*-}^{n-\a_n,\p}}\left(\frac{\partial_{n+2} L[x^*]_m(t)}{\p'(t)}\right)=0,
\quad \, k=1,\ldots,m,\\
L[x^*]_m(t)=0,
\end{array}
\right.$$
where
$$[x^*]_m(t):=(t,x^*(t),{^CD_{a+}^{\a_1,\p}} x^*(t),\ldots, {^CD_{a+}^{\a_m,\p}} x^*(t)).$$
\end{theorem}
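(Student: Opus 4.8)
The plan is to mimic the variational argument of Theorem~\ref{teo:main}, but now the Lagrangian depends on several Caputo derivatives ${^CD_{a+}^{\a_n,\p}} x$ with $\a_n\in(n-1,n)$, so the integration-by-parts step must be applied to each term separately, and the boundary terms will involve sums over $k$ as in Theorem~\ref{integrationparts}. First I would take an admissible variation $(x^*+\epsilon v,T^*+\epsilon\triangle T)$ with $v\in C^m[a,b]$ satisfying $v(a)=v^{(1)}(a)=\cdots=v^{(m-1)}(a)=0$ (so that $x^*+\epsilon v\in\Omega_m$), form $j(\epsilon):=J(x^*+\epsilon v,T^*+\epsilon\triangle T)$, and use $j'(0)=0$. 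Differentiating under the integral and treating the variable upper limit gives
\[
j'(0)=\int_a^{T^*}\!\Big[\partial_2 L[x^*]_m(t)\,v(t)+\sum_{n=1}^m\partial_{n+2}L[x^*]_m(t)\,{^CD_{a+}^{\a_n,\p}}v(t)\Big]dt+\triangle T\cdot L[x^*]_m(T^*).
\]

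Next I would apply the integration-by-parts formula of Theorem~\ref{integrationparts} to each term $\int_a^{T^*}\partial_{n+2}L[x^*]_m(t)\,{^CD_{a+}^{\a_n,\p}}v(t)\,dt$ with $x\leftrightarrow\partial_{n+2}L[x^*]_m$, $y\leftrightarrow v$, lower limit $a$, upper limit $T^*$, and the order $n=\a_n$'s integer ceiling (which is exactly $n$ here since $\a_n\in(n-1,n)$). This produces the volume term $\int_a^{T^*}{D_{T^*-}^{\a_n,\p}}\!\big(\partial_{n+2}L[x^*]_m(t)/\p'(t)\big)\p'(t)\,v(t)\,dt$ plus a boundary term $\big[\sum_{k=0}^{n-1}(-\tfrac{1}{\p'}\tfrac{d}{dt})^k{I_{T^*-}^{n-\a_n,\p}}(\cdots)\cdot(\tfrac{1}{\p'}\tfrac{d}{dt})^{n-k-1}v\big]_{a}^{T^*}$. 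At the lower limit $t=a$ every such boundary contribution vanishes because $(\tfrac{1}{\p'}\tfrac{d}{dt})^{j}v(a)=0$ for $j=0,\dots,m-1\geq n-1$ (this is where the constraints defining $\Omega_m$ are used). Collecting all terms, $j'(0)=0$ becomes
\[
\int_a^{T^*}\!\Big[\partial_2 L[x^*]_m(t)+\sum_{n=1}^m{D_{T^*-}^{\a_n,\p}}\Big(\tfrac{\partial_{n+2}L[x^*]_m(t)}{\p'(t)}\Big)\p'(t)\Big]v(t)\,dt
+\mathcal{B}(T^*)+\triangle T\cdot L[x^*]_m(T^*)=0,
\]
where $\mathcal B(T^*)$ is the sum over $n$ and $k$ of the boundary terms evaluated at $t=T^*$.

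Then I would first restrict to variations with $\triangle T=0$ and with $(\tfrac{1}{\p'}\tfrac{d}{dt})^{j}v(T^*)=0$ for all $j=0,\dots,m-1$, killing $\mathcal B(T^*)$ and the $\triangle T$ term; the fundamental lemma of the calculus of variations (cf.\ \cite[Lemma 2.2.2]{Brunt}) then yields the Euler--Lagrange equation for all $t\in[a,T^*]$. With that equation in force the volume integral vanishes identically, leaving $\mathcal B(T^*)+\triangle T\cdot L[x^*]_m(T^*)=0$ for all admissible $v$ and $\triangle T$. Reorganizing $\mathcal B(T^*)$ by collecting, for each fixed $k\in\{1,\dots,m\}$, the coefficient of $(\tfrac{1}{\p'}\tfrac{d}{dt})^{k-1}v(T^*)$ — which, tracking the index shift $n-k-1=(k-1)$ versus the summation variable in Theorem~\ref{integrationparts}, is precisely $\sum_{n=k}^{m}(-\tfrac{1}{\p'}\tfrac{d}{dt})^{n-k}{I_{T^*-}^{n-\a_n,\p}}\big(\partial_{n+2}L[x^*]_m(t)/\p'(t)\big)$ — and using that the quantities $v(T^*),\ (\tfrac{1}{\p'}\tfrac{d}{dt})v(T^*),\dots,(\tfrac{1}{\p'}\tfrac{d}{dt})^{m-1}v(T^*)$ and $\triangle T$ can be chosen independently, I get the $m$ transversality identities and the condition $L[x^*]_m(T^*)=0$.

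The main obstacle I anticipate is purely bookkeeping: correctly matching the index ranges in Theorem~\ref{integrationparts} (where the order $n$ is tied to a single $\a$) against the present situation where each derivative ${^CD_{a+}^{\a_n,\p}}$ has its \emph{own} order-parameter equal to $n$, so that the inner boundary sum for the $n$-th term runs $k=0,\dots,n-1$ and must be re-indexed to isolate the coefficient of a common factor $(\tfrac{1}{\p'}\tfrac{d}{dt})^{k-1}v(T^*)$ across all $n\geq k$. One also has to check that the hypotheses — $x^*\in C^m$, continuity of the $\partial_{j}L$, and existence/continuity of $t\mapsto{D_{T^*-}^{\a_n,\p}}(\partial_{n+2}L[x^*]_m(t)/\p'(t))$ — are exactly what is needed to legitimize differentiating $j$ under the integral sign and to apply Theorem~\ref{integrationparts} termwise. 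None of this is conceptually hard, but the re-indexing is where an error would most plausibly creep in; doing the case $m=2$ explicitly first is a useful sanity check before writing the general argument.
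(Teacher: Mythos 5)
Your proposal is correct and follows essentially the same route as the paper's proof: an admissible variation with $v(a)=v^{(1)}(a)=\cdots=v^{(m-1)}(a)=0$, termwise integration by parts via Theorem~\ref{integrationparts} with $[\a_n]+1=n$, and then regrouping the boundary sum by the common factor $\left(\tfrac{1}{\p'(t)}\tfrac{d}{dt}\right)^{k-1}v(T^*)$ before invoking the arbitrariness of $v$, its quasi-derivatives at $T^*$, and $\triangle T$. The paper compresses the final step into ``choosing appropriate variations,'' whereas you spell out the re-indexing $j=n-k$ explicitly; the index bookkeeping you give is exactly right.
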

\begin{proof} Consider admissible variations of the form  $(x^*+\epsilon v,T^*+\epsilon \triangle T)$, where $v\in C^m[a,b]$ and $v(a)=0=v^{(n)}(a)$,
 for all $n=1,\ldots,m-1$. Since the first variation of the functional must vanish at an extremum point, we deduce the following:
$$\int_a^{T^*}\left[\partial_2 L[x^*]_m(t) \cdot v(t)+\sum_{n=1}^m\partial_{n+2} L[x^*]_m(t)\cdot{^CD_{a+}^{\a_n,\p}} v(t)\right]dt+\triangle T\cdot
 L[x^*]_m(T^*)=0.$$
Integrating by parts, we get
$$\int_a^{T^*}\left[ \partial_2 L[x^*]_m(t)+\sum_{n=1}^m{D_{T^*-}^{\a_n,\p}}\left(\frac{\partial_{n+2} L[x^*]_m(t)}{\p'(t)}\right)\p'(t)\right]v(t)\,dt+
\triangle T\cdot L[x^*]_m(T^*)$$
$$+\left[\sum_{n=1}^m \left(-\frac{1}{\p'(t)}\frac{d}{dt}\right)^{n-1}{I_{T^*-}^{n-\a_n,\p}}\left(\frac{\partial_{n+2} L[x^*]_m(t)}{\p'(t)}\right)
\cdot v(t)\right.$$
$$+\sum_{n=2}^m \left(-\frac{1}{\p'(t)}\frac{d}{dt}\right)^{n-2}{I_{T^*-}^{n-\a_n,\p}}\left(\frac{\partial_{n+2} L[x^*]_m(t)}{\p'(t)}\right)\cdot
\left(\frac{1}{\p'(t)}\frac{d}{dt}\right)v(t)$$
$$\left.+\ldots+{I_{T^*-}^{m-\a_m,\p}}\left(\frac{\partial_{m+2} L[x^*]_m(t)}{\p'(t)}\right)\cdot \left(\frac{1}{\p'(t)}\frac{d}{dt}\right)^{m-1}v(t)
\right]_{t=a}^{t=T^*}=0.$$
Choosing appropriate variations, we deduce the result.
\end{proof}

\subsection{Optimal fractional order}\label{sec:optimal}

One advantage of considering fractional derivatives in modelling phenomena is that they may describe more efficiently the dynamics than ordinary derivatives.
So, a natural question to pose is what should be the order of the fractional operator, in order to minimize the functional. The next result provides necessary
conditions to determine the fractional order.

\begin{theorem}\label{FracOrder} Consider the functional
$$J(x,T,\a):=\int_a^T L[x](t)\,dt,$$
where $x\in C^1[a,b]$ with $x(a)$ fixed, $T\in[a,b]$ and $\a\in(0,1)$. Assume that $(x^*,T^*,\a^*)$ is a local minimum for $J$, and that there
 exists and is continuous the function $t\mapsto {D_{T^*-}^{\a,\p}}\left(\partial_3 L[x^*](t)/\p'(t)\right)$ on $[a,T^*]$. Given $t\in[a,T^*]$,
 define a function $ \Lambda_t:(0,1)\to\mathbb R$ by the rule $\Lambda_t(\a):=\LD [x^*](t)$. Then,
$$\partial_2 L[x^*](t)+{D_{T^*-}^{\a,\p}}\left(\frac{\partial_3 L[x^*](t)}{\p'(t)}\right)\p'(t)=0$$
for each $t\in[a,T^*]$, and at $t=T^*$, the following holds:
$$\left\{
\begin{array}{l}
{I_{T^*-}^{1-\a,\p}}\left(\frac{\partial_3 L[x^*](t)}{\p'(t)}\right)=0\\
L[x^*](t)=0.
\end{array}
\right.$$
Also, we have the following equality:
$$\int_a^{T^*}\partial_3 L[x^*](t)\cdot \Lambda_t'(\a^*)\,dt=0.$$
\end{theorem}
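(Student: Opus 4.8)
The plan is to treat this as a three-parameter variational problem and extract the optimality conditions by differentiating along admissible perturbations of each of the three variables $x$, $T$, and $\a$ separately. First I would introduce an admissible variation of the form $(x^*+\epsilon v, T^*+\epsilon\triangle T, \a^*+\epsilon\triangle\a)$, where $v\in C^1[a,b]$ with $v(a)=0$, and $\triangle T, \triangle\a\in\mathbb R$ are chosen small enough that $\a^*+\epsilon\triangle\a$ stays in $(0,1)$ for $|\epsilon|\ll 1$. Define $j(\epsilon):=J(x^*+\epsilon v, T^*+\epsilon\triangle T, \a^*+\epsilon\triangle\a)$; since $(x^*,T^*,\a^*)$ is a local minimum, $j'(0)=0$.

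The key step is to compute $j'(0)$ by the chain rule, keeping track of the dependence of the integrand on $\a$ through the fractional derivative term. Writing $[x^*+\epsilon v]$ with fractional order $\a^*+\epsilon\triangle\a$, the derivative of the fractional-derivative argument at $\epsilon=0$ has two contributions: one from varying $x^*$ to $x^*+\epsilon v$ (giving ${}^CD_{a+}^{\a^*,\p}v(t)$ as before), and one from varying the order $\a^*$ to $\a^*+\epsilon\triangle\a$ (giving $\triangle\a\cdot\Lambda_t'(\a^*)$, by the very definition of $\Lambda_t$). Collecting terms, and using the integration-by-parts formula of Theorem \ref{integrationparts} on the term containing ${}^CD_{a+}^{\a^*,\p}v(t)$ exactly as in the proof of Theorem \ref{teo:main}, I obtain
\begin{align*}
0=j'(0)&=\int_a^{T^*}\left[\partial_2 L[x^*](t)+{D_{T^*-}^{\a,\p}}\left(\frac{\partial_3 L[x^*](t)}{\p'(t)}\right)\p'(t)\right]v(t)\,dt\\
&\quad+\left[{I_{T^*-}^{1-\a,\p}}\left(\frac{\partial_3 L[x^*](t)}{\p'(t)}\right)\cdot v(t)\right]_{t=a}^{t=T^*}+\triangle T\cdot L[x^*](T^*)\\
&\quad+\triangle\a\int_a^{T^*}\partial_3 L[x^*](t)\cdot\Lambda_t'(\a^*)\,dt,
\end{align*}
where all fractional operators carry order $\a^*$.

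From here the conclusions follow by the usual arbitrariness arguments, applied one variable at a time. Setting $\triangle T=\triangle\a=0$ and $v(T^*)=0$ with $v$ otherwise arbitrary, the fundamental lemma of the calculus of variations yields the Euler--Lagrange equation; then, with the Euler--Lagrange equation in hand, letting $v(T^*)$ and $\triangle T$ be free (and $\triangle\a=0$) gives the two transversality conditions at $t=T^*$, precisely as in Theorem \ref{teo:main}. Finally, taking $v\equiv 0$, $\triangle T=0$, and $\triangle\a\neq 0$ arbitrary forces $\int_a^{T^*}\partial_3 L[x^*](t)\cdot\Lambda_t'(\a^*)\,dt=0$, which is the new condition on the optimal order. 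The main obstacle is the differentiation under the integral sign with respect to $\a$: one needs that $\a\mapsto{}^CD_{a+}^{\a,\p}x^*(t)$ is differentiable and that the resulting difference quotients are dominated well enough to pass the limit inside the integral; the hypothesis that $\Lambda_t'(\a^*)$ exists (together with continuity of $\partial_3 L$ and the compactness of $[a,T^*]$) is exactly what makes this legitimate, so in the write-up this regularity point should be stated explicitly rather than glossed over.
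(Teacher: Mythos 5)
Your proposal is correct and follows essentially the same route as the paper: the same three-parameter variation $(x^*+\epsilon v,\,T^*+\epsilon\triangle T,\,\a^*+\epsilon\triangle\a)$, the same first-variation formula with the extra term $\triangle\a\int_a^{T^*}\partial_3 L[x^*](t)\cdot\Lambda_t'(\a^*)\,dt$ obtained via integration by parts, and the same arbitrariness argument to separate the conditions. Your closing remark on justifying differentiation under the integral sign in $\a$ is a point the paper glosses over, so keeping it explicit is a mild improvement rather than a deviation.
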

\begin{proof} Define
$$j(\epsilon):=J(x^*+\epsilon v, T^*+\epsilon \triangle T, \a^*+\epsilon \triangle \a),$$
where $v\in C^1[a,b]$ with $v(a)=0$ and $\triangle T,\triangle \a$ are two fixed reals.
Since $j'(0)=0$, using integration by parts, we arrive at the formula
$$\int_a^{T^*}\left[\partial_2 L[x^*](t)+{D_{T^*-}^{\a,\p}}\left(\frac{\partial_3 L[x^*](t)}{\p'(t)}\right)\p'(t)\right]\cdot v(t)\,dt
+ \left[{I_{T^*-}^{1-\a,\p}}\left(\frac{\partial_3 L[x^*](t)}{\p'(t)}\right)\cdot v(t)\right]_{t=a}^{t=T^*}$$
$$+\triangle T\cdot L[x^*](T^*)+\triangle \a\cdot \int_a^{T^*}\partial_3 L[x^*](t)\cdot \Lambda_t'(\a^*)\,dt=0.$$
By the arbitrariness of $v$, $\triangle T$ and $\triangle \a$, we prove the desired result.
\end{proof}

\subsection{Sufficient conditions}\label{sec:suff}

So far we deduced necessary conditions that every minimizer of the functional must verify. Now, we will deal with a sufficient condition, and for that some conditions of convexity over the Lagrangian are needed.

\begin{theorem} Assume that $L$ is convex in $[a,b]\times\mathbb R^2$, in the sense that
$$L(t,x+v,y+w)-L(t,x,y)\geq \partial_2 L(t,x,y)v+\partial_3 L(t,x,y)w,$$
for all $t\in[a,b]$ and $x,y,v,w\in\mathbb R$. If  $(x^*,T^*)$ satisfies the Euler--Lagrange equation \eqref{ELeq} and the transversality
conditions \eqref{TransCon}, then
$$\forall\epsilon>0\,\exists\delta>0 \, : \, \|(v,\triangle T)\|<\delta \Rightarrow J(x^*+v,T^*+\triangle T)-J(x^*,T^*)\geq-\epsilon$$
on the space $\Omega\times[a,b]$, where functional $J$ is given by \eqref{funct1}.
\end{theorem}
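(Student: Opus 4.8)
The idea is to bound the increment $J(x^*+v,T^*+\triangle T)-J(x^*,T^*)$ from below using the convexity hypothesis, after splitting the integral over $[a,T^*+\triangle T]$ into the part over $[a,T^*]$ and the leftover part over the short interval between $T^*$ and $T^*+\triangle T$. First I would write
$$J(x^*+v,T^*+\triangle T)-J(x^*,T^*)=\int_a^{T^*}\bigl(L[x^*+v](t)-L[x^*](t)\bigr)\,dt+\int_{T^*}^{T^*+\triangle T}L[x^*+v](t)\,dt.$$
On the first integral apply the convexity inequality with $x=x^*(t)$, $y={}^CD_{a+}^{\a,\p}x^*(t)$, $v=v(t)$, $w={}^CD_{a+}^{\a,\p}v(t)$ (using linearity of the Caputo derivative), obtaining the lower bound
$$\int_a^{T^*}\Bigl(\partial_2L[x^*](t)\,v(t)+\partial_3L[x^*](t)\,{}^CD_{a+}^{\a,\p}v(t)\Bigr)\,dt.$$

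Next I would transform this lower bound exactly as in the proof of Theorem \ref{teo:main}: apply the integration-by-parts formula of Theorem \ref{integrationparts} to the second term, producing the bulk term
$$\int_a^{T^*}\Bigl[\partial_2L[x^*](t)+{D_{T^*-}^{\a,\p}}\Bigl(\tfrac{\partial_3L[x^*](t)}{\p'(t)}\Bigr)\p'(t)\Bigr]v(t)\,dt$$
plus the boundary term $\bigl[{I_{T^*-}^{1-\a,\p}}(\partial_3L[x^*](t)/\p'(t))\cdot v(t)\bigr]_{t=a}^{t=T^*}$. The bulk integrand vanishes identically because $(x^*,T^*)$ satisfies the Euler--Lagrange equation \eqref{ELeq}; the boundary term at $t=a$ vanishes since $v(a)=0$ (admissibility in $\Omega$), and at $t=T^*$ the first transversality condition in \eqref{TransCon} kills the coefficient, so that whole boundary contribution is zero. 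Hence the first integral is $\geq 0$, and
$$J(x^*+v,T^*+\triangle T)-J(x^*,T^*)\geq\int_{T^*}^{T^*+\triangle T}L[x^*+v](t)\,dt.$$

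It remains to show the leftover integral is $\geq-\epsilon$ when $\|(v,\triangle T)\|$ is small. Here I would use continuity: by the second transversality condition $L[x^*](T^*)=0$, and $L$ together with the map $t\mapsto{}^CD_{a+}^{\a,\p}x^*(t)$ is continuous, so on a neighbourhood of $T^*$ the integrand $L[x^*+v](t)$ is bounded in absolute value by a constant $K$ depending only on the neighbourhood (shrinking $\delta$ so that $\|v\|_\Omega<\delta$ keeps $(t,x^*(t)+v(t),{}^CD_{a+}^{\a,\p}x^*(t)+{}^CD_{a+}^{\a,\p}v(t))$ in a fixed compact set where $L$ is bounded). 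Then $\bigl|\int_{T^*}^{T^*+\triangle T}L[x^*+v](t)\,dt\bigr|\leq K|\triangle T|\leq K\delta$, so choosing $\delta<\epsilon/K$ gives the claimed inequality. The main obstacle is the bookkeeping in this last step: one must be careful that $T^*+\triangle T$ may lie on either side of $T^*$ (so the leftover integral carries a sign), that the norm $\|(v,\triangle T)\|$ controls both $\sup|v|$ and $\sup|{}^CD_{a+}^{\a,\p}v|$ so that the argument of $L$ stays in a compact set, and that $L$ is only assumed continuous (not bounded a priori), which is why one localizes to a compact neighbourhood of the point $(T^*,x^*(T^*),{}^CD_{a+}^{\a,\p}x^*(T^*))$ before extracting the bound $K$.
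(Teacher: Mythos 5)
Your proof is correct and follows essentially the same route as the paper: the same splitting of the increment at $T^*$, the same application of convexity followed by the integration-by-parts formula, with the Euler--Lagrange equation and the transversality conditions annihilating the bulk and boundary terms. The only (harmless) difference is in the final step, where you bound the leftover integral by $K|\triangle T|$ via local boundedness of $L$ on a compact neighbourhood, whereas the paper writes it as $\triangle T\cdot L[x^*+v](c)$ by the mean value theorem and invokes continuity together with $L[x^*](T^*)=0$; both yield the required $-\epsilon$ bound.
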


\begin{proof}
Observe that
\begin{align*}
J(x^*+v,T^*+\triangle T)&-J(x^*,T^*)=\int_a^{T^*+\triangle T}L[x^*+v](t)\,dt-\int_a^{T^*}L[x^*](t)\,dt\\
&=\int_a^{T^*}\left[L[x^*+v](t)-L[x^*](t)\right]\,dt+\int_{T^*}^{T^*+\triangle T}L[x^*+v](t)\,dt\\
&\geq\int_a^{T^*}\left[\partial_2 L[x^*](t) \cdot v(t)+\partial_3 L[x^*](t)  \cdot \LD v(t)\right]dt+\triangle T\cdot L[x^*+v](c)=\star,
\end{align*}
for some $c$ between $T^*$ and $T^*+\triangle T$. Integrating by parts, we get
\begin{align*}
\star&=\int_a^{T^*}\left[\partial_2 L[x^*](t)+{D_{T^*-}^{\a,\p}}\left(\frac{\partial_3 L[x^*](t)}{\p'(t)}\right)\p'(t)\right]v(t)\,dt\\
& \quad+ \left[{I_{T^*-}^{1-\a,\p}}\left(\frac{\partial_3 L[x^*](t)}{\p'(t)}\right)\cdot v(t)\right]_{t=a}^{t=T^*} +\triangle T\cdot L[x^*+v](c)\\
& =\triangle T\cdot L[x^*+v](c).
\end{align*}
Since $L[x^*](T^*)=0$ and the map $t\mapsto L[x^*+v](t)$ is continuous, given $\epsilon>0$, there exists some $\delta>0$ such that
$$\|(v,\triangle T)\|<\delta \Rightarrow \left| L[x^*+v](c)\right|<\frac{\epsilon}{|\triangle T|}$$
(the case $\triangle T=0$ is obvious). Thus,
$$J(x^*+v,T^*+\triangle T)-J(x^*,T^*)\geq-\epsilon.$$
\end{proof}

We remark that we can not conclude that
$$J(x^*+v,T^*+\triangle T)-J(x^*,T^*)\geq0,$$
under these assumptions.
For example, if we consider
$$J(x,T):=\int_0^T(1-t)\,dt,$$
with the initial condition $x(0)=x_0\in\mathbb R$, it is easy to verify that $L$ is convex and that $(x^*,T^*)=(x^*,1)$, where $x^*\in C^1[a,b]$ is an arbitrary function, verifies Eqs  \eqref{ELeq} and \eqref{TransCon}. However, we have the following:
$$J(x^*+v,1+\triangle T)-J(x^*,1)=-\frac12(\triangle T)^2,$$
which is a negative number when $\triangle T\not=0$.

\subsection{Examples}

\begin{example} Consider the functional
$$J(x,T):=\int_0^T \left[\left({^CD_{0+}^{\a,\p}} x(t)-\frac{(\p(t)-\p(0))^{1-\a}}{\Gamma(2-\a)}\right)^2+t^2-1\right]\,dt$$
subject to the restriction $x(0)=0$. From Theorem \ref{teo:main}, the necessary condition that every minimizer of the functional must fulfill is the following
$${D_{T-}^{\a,\p}}\left(2\frac{{^CD_{0+}^{\a,\p}} x(t)-\frac{(\p(t)-\p(0))^{1-\a}}{\Gamma(2-\a)}}{\p'(t)}\right)\p'(t)=0,$$
for all $t\in[0,T]$, and also the two next transversality conditions
$$\left\{
\begin{array}{l}
{I_{T-}^{1-\a,\p}}\left(2\frac{{^CD_{0+}^{\a,\p}} x(t)-\frac{(\p(t)-\p(0))^{1-\a}}{\Gamma(2-\a)}}{\p'(t)}\right)=0\\
L[x](t)=0.
\end{array}
\right.$$
must be meet at  $t^=T^*$. Once
$${^CD_{0+}^{\a,\p}} \left(\p(t)-\p(0)\right)=\frac{(\p(t)-\p(0))^{1-\a}}{\Gamma(2-\a)},$$
we see that the pair
$$(x^*,T^*):=(\p(t)-\p(0),1)$$
satisfies all three conditions. Also, using the Legendre condition, a local minimizer for the functional must verify the condition
$$\partial^2_{33}L[x](t)\geq0, $$
which for our example is verified, since $\partial^2_{33}L[x](t)=2$. We remark that, for every curve $x$ and for every endpoint $T$, the inequality
$$J(x,T)\geq\int_0^T (t^2-1)\,dt\geq \int_0^1 \left[t^2-1\right]\,dt=-\frac23,$$
holds, and that
$$J(x^*,T^*)=-\frac23,$$
and so $(x^*,T^*)$ is actually the global minimizer of $J$.
\end{example}

\begin{example} For our second example, let

$$J(x,T):=\int_0^T \left[\left({^CD_{0+}^{\a,\p}} x(t)\right)^2+\left(\frac{(\p(t)-\p(0))^{1-\a}}{\Gamma(2-\a)}\right)^2+t^2-1\right]\,dt$$
subject to the restrictions $x(0)=0$ and
$$\int_0^T \left[{^CD_{0+}^{\a,\p}} x(t)\cdot\frac{(\p(t)-\p(0))^{1-\a}}{\Gamma(2-\a)}\right]\,dt=\Phi(T),$$
where the function $\Phi$ is defined as
$$\Phi(T):=\int_0^T \left[\frac{(\p(t)-\p(0))^{1-\a}}{\Gamma(2-\a)}\right]^2\,dt.$$
Considering $\lambda=-2$, the augmented function is given by
$$F(x)=\left({^CD_{0+}^{\a,\p}} x(t)-\frac{(\p(t)-\p(0))^{1-\a}}{\Gamma(2-\a)}\right)^2+t^2-1,$$
and as we have seen in the previous example, the function
$$x^*=\p(t)-\p(0)$$
satisfies the two first necessary conditions, as stated in Theorem \ref{teo:isop}. Using the second transversality condition of Theorem \ref{teo:isop}, we determine the optimal time $T^*$ by solving the equation
$${T^*}^2-1=-2.\left[\frac{(\p(T^*)-\p(0))^{1-\a}}{\Gamma(2-\a)}\right]^2.$$
\end{example}

\begin{example}
For our next example, we determine the best fractional order to minimize a given functional, using the necessary conditions given by Theorem \ref{FracOrder}.
Let
$$J(x,T,\a):=\int_a^T \Bigg[\frac{(\p(t)-\p(a))^\a}{2\Gamma(\a+2)}\cdot(\LD x(t))^2-(\p(t)-\p(a))^{\a+1}\cdot \LD x(t)+20\Gamma(\a+2)\Bigg]dt,$$
subject to the initial condition $x(a)=0$.
If we define $x^*(t):= (\p(t)-\p(a))^{\a+1}$, then $\LD x^*(t)=\Gamma(\a+2)(\p(t)-\p(a))$ and so $\partial_3 L[x^*](t)=0$, for all $t$.
Thus, the three next conditions
$$\partial_2 L[x^*](t)+{D_{T^*-}^{\a,\p}}\left(\frac{\partial_3 L[x^*](t)}{\p'(t)}\right)\p'(t)=0, \quad t\in[a,T^*],$$
$${I_{T^*-}^{1-\a,\p}}\left(\frac{\partial_3 L[x^*](t)}{\p'(t)}\right)=0, \quad t=T^*$$
and
$$\int_a^{T^*}\partial_3 L[x^*](t)\cdot \Lambda_t'(\a^*)\,dt=0,$$
are verified for all $T^*$ and $\a^*$. It remains to solve the equation $L[x^*](T^*)=0$. Since
\begin{equation}\label{auxFracOr}L[x^*](t)=0 \Leftrightarrow \p(t)=\p(a)+40^{1/(\a+2)},\end{equation}
we establish a relation between $T^*$ and $\a^*$ given by condition \eqref{auxFracOr}.
Observe that
$$J(x^*,T^*,\a)=\int_a^{T^*} \Gamma(\a+2)\left[20-\frac{(\p(t)-\p(a))^{\a+2}}{2}\right]\,dt,$$
and if we differentiate with respect to $\a$, and then  put the resulting function equal to zero, we get
\begin{equation}\label{auxFracOr1}\int_a^{T^*} \Gamma(\a+2)\left[\Psi(\a+2)\left(20-\frac{(\p(t)-\p(a))^{\a+2}}{2}\right)
-\frac{\ln(\p(t)-\p(a))}{2}\p'(t)(\p(t)-\p(a))^{\a+2}\right]\,dt=0,\end{equation}
where $\Psi$ denotes the digamma function. For example, for $\p_1(t)=t$ and $\p_2(t)=\sqrt{t+1}$, with initial point $a=0$,
we  have $T_1^*=40^{1/(\a^*+2)}$ and $T_2^*=1+40^{1/(\a^*+2)}$, respectively. The graphs of the functions
$$\a\mapsto J(x^*,T^*,\a), \quad \a\in(0,1),$$
are given in Figure \ref{FracOrder5}.
\begin{figure}[h]
  \centering
  \subfigure[Graph with respect to $\p_1$.]{\includegraphics[width=6cm]{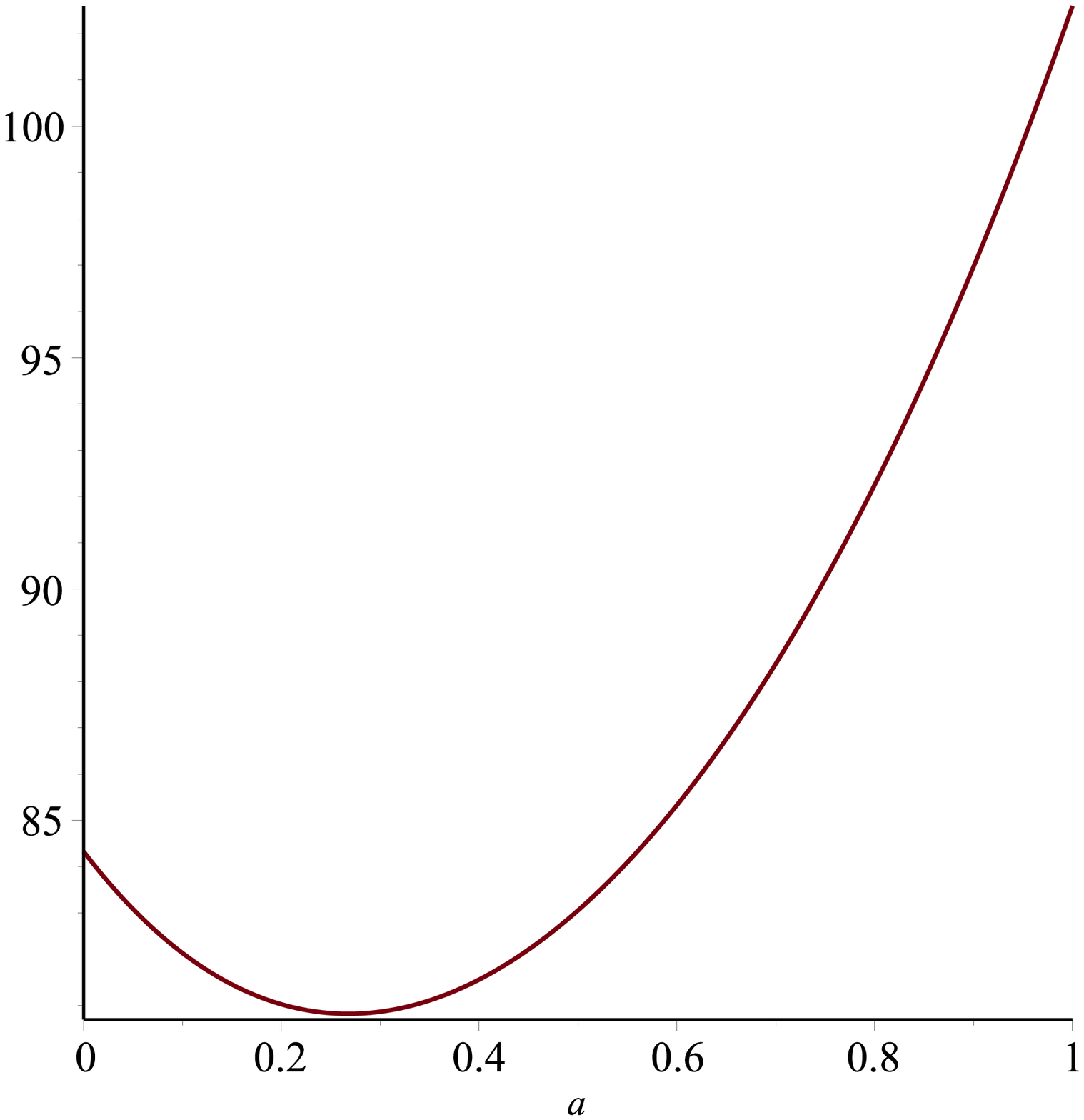}}
\subfigure[Graph with respect to $\p_2$.]{\includegraphics[width=6cm]{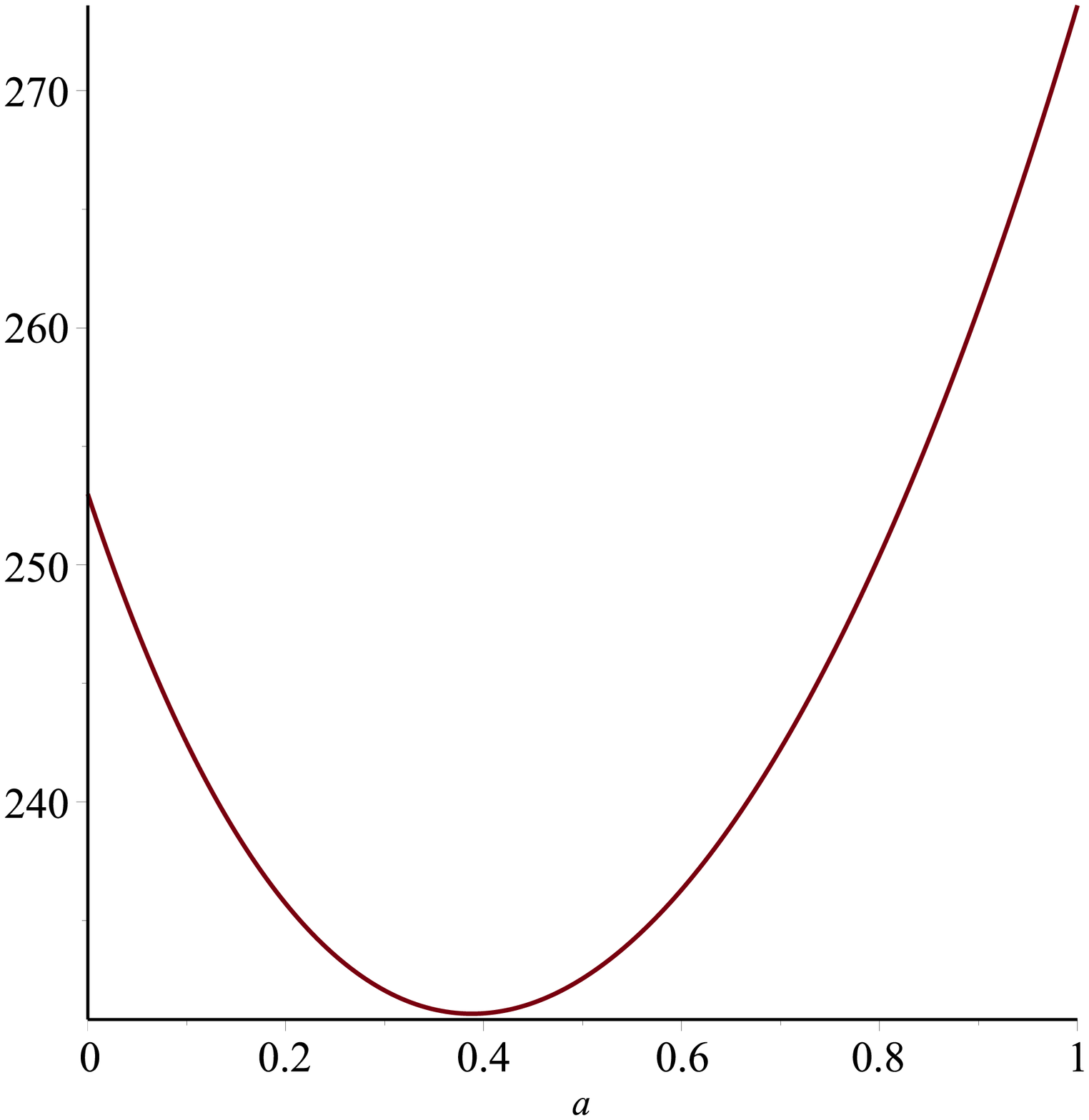}}
  \caption{Best fractional order.}\label{FracOrder5}
\end{figure}
Solving numerically Eq. \eqref{auxFracOr1}, we obtain an approximation of the fractional order that minimizes the problem:
$$\a_1\approx 0.2677 \quad \mbox{and} \quad \a_2\approx 0.2827.$$
\end{example}

\section*{Acknowledgments}

Work supported by Portuguese funds through the CIDMA - Center for Research and Development in Mathematics and Applications, and the Portuguese
Foundation for Science and Technology (FCT-Funda\c{c}\~ao para a Ci\^encia e a Tecnologia), within project UID/MAT/04106/2013.


\end{document}